\newtheorem{proposition}{Proposition}
\newtheorem{lemma}{Lemma}
\newtheorem{theorem}{Theorem}
\newtheorem{corollary}{Corollary}
\theoremstyle{definition}
\newtheorem{definition}{Definition}
\theoremstyle{definition}
\newtheorem{construction}{Construction}
\newtheorem{example}{Example}
\theoremstyle{remark}
\newtheorem {remark}{Remark}
\DeclareMathOperator{\Spec}{Spec}
\DeclareMathOperator{\homo}{Hom}
\DeclareMathOperator{\Aut}{Aut}
\DeclareMathOperator{\Pic}{Pic}
\DeclareMathOperator{\GL}{GL}
\DeclareMathOperator{\supp}{Supp}
\def\Im{{\rm Im}\,}
\def\BG{{\mathbb G}}
\def\BK{{\mathbb K}}
\def\BG{{\mathbb G}}
\def\BF{{\mathbb F}}
\def\BK{{\mathbb K}}
\def\BZ{{\mathbb Z}}
\def\BQ{{\mathbb Q}}
\def\BP{{\mathbb P}}
\def\BA{{\mathbb A}}
\def\CO{\mathcal{O}}
\def\CL{\mathcal{L}}
\def\Soc{\mathrm{Soc}}
\def\Cl{\mathrm{Cl}}
\def\Pic{\mathrm{Pic}}
\def\div{\mathrm{div}}
\def\mf{\mathfrak{m}}
\def\Ann{\mathrm{Ann}}
\def\supp{\mathrm{supp}}
\title{Monomial algebras and $\mathbb{G}_a^n$-equivariant embeddings into toric varieties}
\author{Alexander Chernov}
\address{%
    \begin{minipage}{\textwidth}
        \setlength{\parindent}{0pt}
        \textbf{Email:} \url{aleksander.chernov@math.msu.ru}\\[4pt]
        Lomonosov Moscow State University, Faculty of Mechanics and Mathematics,\\
        Department of Higher Algebra, Leninskie Gory 1, Moscow, 119991 Russia;\\[2pt]
        and\\[2pt]
        HSE University, Faculty of Computer Science,\\
        Pokrovsky Boulevard 11, Moscow, 109028, Russia
    \end{minipage}%
}
\thanks{2020 \emph{Mathematics Subject Classification.} Primary 14M25, 14L30; Secondary 13F55, 52B20.\endgraf This article is an output of a research project (HSE-BR-2025-22) implemented as part of the Basic Research Program at HSE University. \endgraf \emph{Key words and phrases.} Toric variety, additive action, locally nilpotent derivation, automorphism, unipotent group.}
\begin{document}
\maketitle

\begin{abstract}
An induced additive action on a projective variety $X\subseteq\mathbb{P}^n$ is a regular action of the group $\mathbb{G}_a^m$ on $X$ with an open orbit that can be extended to a regular action on~$\mathbb{P}^n$. Such actions are known to correspond to pairs $(A, U)$, where $A$ is a local algebra and~$U$ is a generating subspace lying in the maximal ideal. This paper studies additive actions on projective toric varieties, with a particular focus on toric surfaces. We prove that for any linearly normal toric variety equipped with a torus-normalized additive action, the associated pair consists of a monomial algebra and a subspace spanned by variables. Also we describe pairs that correspond to additive actions on toric surfaces in low-dimensional projective spaces.
\end{abstract}

\section{Introduction}
Let $\BK$ be an algebraically closed field of characteristic zero. By a variety or an algebraic group we always mean an algebraic variety or an algebraic group over~$\BK$. By open and closed subsets of algebraic varieties we always mean open and closed in Zariski topology subsets. By \emph{local algebra} we mean a commutative associative algebra over~$\BK$ with a unit and a unique maximal ideal. We denote by~$\BG_a$ the algebraic group~$(\BK, +)$ and by $\BG_m$ the group~$(\BK^\times,\cdot);$ by \emph{torus} we mean the algebraic group~$\BG_m^k=\BG_m\times\dotsc\times\BG_m$ ($k$-times). We say that an algebraic variety is \emph{toric} if it is normal and admits an algebraic torus action with an open orbit.

\begin{definition}
    An \emph{additive action} on an algebraic variety $X $ is a regular effective action of $\BG_a^m$ on $X$ with an open orbit. By \emph{induced additive action} on a projective algebraic variety $X \subseteq \BP^n$ we mean a regular effective action of $\BG_a^m$ on $\BP^n$ such that $X$ is the closure of a $\BG_a^m$-orbit.   
\end{definition}

Not all additive actions on projective varieties are induced (see \cite[Example 1]{AP}). But if a projective variety $X\subseteq \BP^n$ is normal and linearly normal, i.e. the restriction map~$H^0(\BP^n,\CO_{\BP^n}(1))\to H^0(X, \CO_{\BP^n}(1)\big|_X)$ is surjective, then any additive action on $X$ is induced.

Knop and Lange studied regular actions of commutative algebraic groups on projective space with an open orbit in \cite{KL}. Hassett and Tschinkel focused on equivariant compactifications of~$\BG_a^n$ in~\cite{HT}. In both works authors found a one-to-one correspondence between local algebras of dimension $n+1$ and additive actions on $\BP^n$. Further, this theory has been extensively developed in many directions. For instance, the case of additive actions on projective hypersurfaces was studied in~\cite{ABZ,AP,Be,BCS,Sha}. Results in~\cite{BM, UD, UL, ZM} are related to additive actions on Fano varieties.

In our paper, we study the case of toric varieties. Remarkable results about additive actions on toric varieties were found in~\cite{AR}. Authors proved that a complete toric variety admits an additive action if and only if it admits a \emph{normalized} additive action, i.e. an acting torus normalizes $\BG_a^n$ in the group of automorphisms. Moreover, they proved that any two normalized additive actions are equivalent; see \cite[Theorem 2]{AR} and \cite[Theorem 3]{AR}.

The key definition in the theory of additive actions on projective hypersurfaces is $H$-pair; see~\cite[Definition 2.12]{AZ}. We introduce the following more general definition for the case of an arbitrary subvariety in the projective space.
\begin{definition}
    A pair $(A, U)$ is called an~\emph{$S$-pair} if~$A$ is a local algebra with a maximal ideal~$\mf$ and~$U\subseteq\mf$ is a subspace generating~$A$ as a~$\BK$-algebra.
\end{definition}

There is a one-to-one correspondence between induced additive actions on projective subvarieties of dimension $m$ in $\BP^n$ and $S$-pairs $(A, U)$, where $A$ is a local algebra of dimension~$n+1$ and $U$ is a subspace of dimension $m$ lying in the maximal ideal of $A$; see~\cite[Theorem 2.6]{AZ}.

In Section 3, we investigate the correspondence between $S$-pairs and induced additive actions on projective toric varieties. It turns out that for linearly normal projective toric varieties, the normalized additive action corresponds to the $S$-pair consisting of a monomial algebra (Definition~\ref{monomialdefinition}) and a subspace spanned by variables~(Theorem~\ref{algebranorm}). Conversely, any such $S$-pair yields a variety admitting a torus action with a normalized induced additive action; however, the variety might not be normal (Proposition~\ref{monomialhomo}, Example~\ref{nonnormalinduced}). 

Section~\ref{lowdimalgebras} focuses on the case of toric surfaces. Dzhunusov proves in~\cite{DZ} that a complete toric surface admits at most two non-equivalent additive actions. We describe explicitly $S$-pairs corresponding to linearly normal toric surfaces equipped with an additive action in a projective space of dimension at most 5 (Table~1 and Table~2). It turns out that such surfaces are either weighted projective planes or Hirzebruch surfaces.

\section{The Hassett-Tschinkel correspondence}

A projective variety $X \subseteq \BP^n$ is called \emph{non-degenerate} if it is not contained in any hyperplane of $\BP^n$. The following theorem is central to our topic.
\begin{theorem}\cite[Theorem 2.6]{AZ}\label{correspondence}
There is a one-to-one correspondence between
\begin{enumerate}
    \item equivalence classes of induced additive actions on non-degenerate subvarieties in $\BP^n$; 
    \item isomorphism classes of pairs $(A, U)$, where $A$ is a local $(n+1)$-dimensional algebra with a maximal ideal $\mathfrak{m}$ and $U$ is a subspace in $\mathfrak{m}$ that generates $A$ as an algebra with unit. 
\end{enumerate}
\end{theorem}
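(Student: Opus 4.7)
The strategy is to extend the classical Hassett--Tschinkel bijection, in which additive actions on $\BP^n$ with an open orbit correspond to $(n+1)$-dimensional local $\BK$-algebras. In that setting $A$ is reconstructed as the commutative subalgebra of $\mathrm{End}(\BK^{n+1})$ generated by commuting nilpotents obtained from a linear lift of the action. For subvarieties the acting group has only $m=\dim X\le n$ parameters, so a single generating subspace $U\subseteq\mathfrak{m}$ must be recorded alongside $A$; $U$ replaces the role played by all of $\mathfrak{m}$ in the Hassett--Tschinkel case.

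From action to pair: given $\rho\colon\BG_a^m\to\Aut(\BP^n)$ with orbit closure $X$, lift $\rho$ to a linear representation $\tilde\rho\colon\BG_a^m\to\GL_{n+1}(\BK)$, which exists because $\Pic(\BG_a^m)=0$ and $\BG_a^m$ has no nontrivial characters. Let $N_1,\dotsc,N_m$ be the images of a basis of $\lie(\BG_a^m)$, which are pairwise commuting nilpotent operators on $\BK^{n+1}$. Set $A=\BK[N_1,\dotsc,N_m]\subseteq\mathrm{End}(\BK^{n+1})$, a commutative local algebra with maximal ideal $\mathfrak{m}=(N_1,\dotsc,N_m)$, and $U=\BK N_1+\dotsb+\BK N_m$. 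Pick $v_0\in\BK^{n+1}$ lifting a point of the open $\BG_a^m$-orbit and consider the evaluation $\varepsilon\colon A\to\BK^{n+1}$, $a\mapsto av_0$. Its image $Av_0$ equals the linear span of the affine orbit of $v_0$, hence all of $\BK^{n+1}$ because $X$ is non-degenerate. For injectivity, commutativity forces $a(bv_0)=b(av_0)=0$ for every $b\in A$ whenever $av_0=0$, so $a$ annihilates $Av_0=\BK^{n+1}$ and therefore $a=0$ in $\mathrm{End}(\BK^{n+1})$. Thus $\varepsilon$ is a linear isomorphism, $\dim A=n+1$, and $(A,U)$ is an $S$-pair with $\dim U=m$.

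From pair to action: identify $\BK^{n+1}$ with $A$ as vector spaces so that $\BP^n=\BP(A)$. Since $\mathfrak{m}$ is nilpotent, $\exp\colon\mathfrak{m}\to 1+\mathfrak{m}\subseteq A^\times$ is a polynomial isomorphism, so $\exp(U)\cong\BG_a^m$ is a closed subgroup of $A^\times$ acting on $\BP(A)$ by multiplication. The orbit of $[1]$ is $\{[\exp(u)]:u\in U\}$; its linear span in $A$ is the unital subalgebra generated by $U$, which equals $A$ because $U$ generates $A$. Hence the closure $X\subseteq\BP(A)$ is non-degenerate and carries an induced additive action of $\BG_a^m$.

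Finally, one verifies the two constructions are mutually inverse modulo the appropriate equivalences: two lifts $\tilde\rho$ differ only by a character (hence coincide), a different $v_0$ in the open orbit merely changes the identification $A\cong\BK^{n+1}$ by a unit of $A$, and an equivalence of actions by $h\in\GL_{n+1}$ induces an algebra isomorphism via conjugation that carries $U$ to $U'$. The subtlest step is the injectivity of the evaluation map $\varepsilon$ in the forward direction, i.e.\ showing $\dim A=n+1$ rather than only $\dim A\ge n+1$; this is the core place where non-degeneracy of $X$ and commutativity of $A$ must be combined, and I expect it to be the main obstacle in the argument.
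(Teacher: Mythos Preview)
Your approach matches the paper's treatment: the paper does not prove this theorem (it is quoted from \cite{AZ}) but only describes the two directions of the correspondence, and your description of both directions---exponentiating $U$ to act on $\BP(A)$ in one direction, and differentiating a linear lift to obtain commuting nilpotents $N_i$ generating $A$ in the other---is exactly what the paper sketches. You supply the one substantive detail the paper omits, namely the evaluation-at-$v_0$ argument showing $\dim A = n+1$, and this argument is correct; the only minor inaccuracy is your remark that a different choice of $v_0$ ``changes the identification $A\cong\BK^{n+1}$ by a unit''---in fact the pair $(A,U)\subseteq\mathrm{End}(\BK^{n+1})$ is defined without reference to $v_0$, which is used solely to establish the dimension count.
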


In Theorem~\ref{correspondence} two $S$-pairs $(A_1, U_1)$ and $(A_2, U_2)$ are isomorphic if there is an isomorphism $\varphi:A_1\to A_2$ such that $\varphi(U_1)=U_2$.

We describe briefly the correspondence in Theorem~\ref{correspondence}. Consider a local algebra~$A~=~\BK~\oplus~\mf$ of dimension $n+1$ and a subspace $U$ of dimension $m$ generating~$A$ as a~$\BK$-algebra. There is an isomorphism $\exp:(\mf, +) \to (1 +\mf, \cdot)$ of the standard form: \begin{equation*}
    \exp(x) = \sum_{k=0}^\infty\frac{x^k}{k!}.
\end{equation*} This formula is well-defined since any element from $\mf$ is nilpotent and hence the series is finite. Restricting the exponent homomorphism to the subspace $U$, we get an isomorphism $\exp: (U, +) \to (\exp U, \cdot)$. Consequently, the group~$(U,+)\simeq\BG_a^m$ acts on ~$A$ as~$(\exp U,\cdot)$ by the standard multiplication inducing the action on~$\BP(A)$. It induces an action of~$\BG_a^m$ on~$\BP(A)\simeq\BP^n$. Denote by $\pi:A\backslash\{0\}\to\BP(A)$ the standard projection. Then the closure \begin{equation*}
    X = \overline{\pi(\exp(U)\cdot1)}
\end{equation*} is the corresponding subvariety in $\BP(A)$ with an induced additive action.

Conversely, let~$X \subseteq \BP^n=\BP(V)$ be a subvariety with an induced additive action of~$\BG_a^m$. One can show that every~$\BG_a^m$-action on~$\BP(V)$ lifts to a linear action on~$V$, so we have the corresponding inclusion~$\varphi:\BG_a^m \to\GL_{n+1}(\BK) \simeq \GL(V)$. Let $(g_1, \dotsc, g_m)$ be coordinates on~$\BG_a^m$. Consider the matrices\begin{equation*}
    T_i = \frac{\partial \varphi(g_1,\dotsc,g_m)}{\partial g_i}\Bigg|_{(g_1,\dotsc,g_m)=(0,\dotsc,0)} \in \text{M}_{n+1}(\BK), \ i = 1,\dotsc, m.
\end{equation*} Then the matrices $T_1, \dotsc, T_m$, as elements of the associative algebra $\text{M}_{n+1}(\BK)$, generate a local algebra $A\subseteq \text{M}_{n+1}(\BK)$ of dimension $n+1$ and form a generating subspace in the maximal ideal of $A$. One can note that the subspace spanned by matrices $T_i$ is just the image of the differential of the map $\varphi$ at unity.

Now we describe the construction of the corresponding $S$-pair for normal varieties in more geometric terms. As we know, if $X$ is a normal algebraic variety, its divisor class group~$\Cl(X)$ contains the Picard group~$\Pic(X)$. So for any invertible sheaf~$\CL\in~\Pic(X)$ there is an isomorphism~$\CL\simeq\CO_X(D)$ for some divisor~$D$ on~$X$. For an invertible sheaf~$\CO_X(D)$, we use the following representation of its global sections: \begin{equation}
    H^0(X, \CO_X(D)) = \{f\in K(X) \ \big| \ \div(f) + D \geq 0\} \cup \{0\}.
\end{equation}

It is well known that for complete varieties it is a finite-dimensional vector space over~$\BK$.

\begin{proposition}\cite[Proposition 3.1]{AZ}\label{picaff}
    Let $X$ be a normal variety and $U$ be an open subset in $X$ isomorphic to the affine space. Then the divisor class group $\Cl(X)$ is freely generated by the prime divisors lying in the complement of the open subset $U$.
\end{proposition}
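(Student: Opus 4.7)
My plan is to establish the standard restriction exact sequence relating $\Cl(X)$ to $\Cl(U)$ via the prime divisors in the complement of $U$, and then to extract both generation and $\BZ$-linear independence from the two elementary facts about affine space: $\Cl(\BA^n)=0$ and $\CO(\BA^n)^{\times}=\BK^{\times}$.

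First I would enumerate the prime divisors $D_1,\dotsc,D_k$ of $X$ that lie in $X\setminus U$; since $U$ is open and dense in $X$, its complement has codimension at least one, and its codimension-one irreducible components are exactly these $D_i$. Since $X$ is normal, restriction of Weil divisors yields a well-defined surjective homomorphism $\Cl(X)\to\Cl(U)$, and a class lies in its kernel precisely when it can be represented by a divisor supported in $X\setminus U$, i.e.~by an integer combination of the $D_i$. This gives the exact sequence
$$\bigoplus_{i=1}^k \BZ\cdot D_i \xrightarrow{\phi} \Cl(X) \to \Cl(U) \to 0.$$
Because $U\cong\BA^n$ has trivial class group, $\phi$ is surjective, so $\Cl(X)$ is generated by $[D_1],\dotsc,[D_k]$.

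For freeness it remains to show $\phi$ is injective. Suppose $\sum a_iD_i=\div(f)$ in the group of Weil divisors on $X$, for some $f\in K(X)^{\times}$. Restricting to $U$ gives $\div(f|_U)=0$, so $f|_U$ is a regular invertible function on $U\cong\BA^n$, and therefore a nonzero constant. Since $U$ is dense in $X$, the rational function $f$ equals that same constant throughout $X$; hence $\div(f)=0$ and all $a_i=0$.

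The main point requiring care is the exact sequence itself, specifically the identification of $\Ker(\Cl(X)\to\Cl(U))$ with the image of $\phi$: given a divisor $D$ on $X$ whose restriction satisfies $D|_U=\div(g)$ for some $g\in K(U)^{\times}=K(X)^{\times}$, the divisor $D-\div(g)$ has no components meeting $U$, so it is supported in $X\setminus U$ and equals a $\BZ$-combination of $D_1,\dotsc,D_k$. Once this standard excision step is in hand, the rest of the argument reduces to the two trivial facts about $\BA^n$ mentioned above.
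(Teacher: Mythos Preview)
Your argument is correct and is exactly the standard one: the excision sequence for $\Cl$ together with $\Cl(\BA^n)=0$ gives generation, and $\CO(\BA^n)^\times=\BK^\times$ gives freeness. Note that the paper does not actually prove this proposition; it merely cites \cite[Proposition~3.1]{AZ}, so there is no in-paper argument to compare against. One cosmetic remark: you write ``enumerate the prime divisors $D_1,\dotsc,D_k$'' without justifying finiteness, but since $X$ is of finite type and $X\setminus U$ is closed, it has only finitely many irreducible components, so this is harmless.
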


If a variety $X$ admits an additive action by $\BG_a^m$, then the divisors in the complement of the open orbit are $\BG_a^m$-invariant and generate freely $\Cl(X)$.

There is a very useful algebraic tool in the theory of $\BG_a$-actions on affine varieties.

\begin{definition}
     The derivation $\delta$ on a commutative algebra $B$ is called \emph{locally nilpotent} if for every element~$b\in B$ there is a natural number $n = n(b)$ such that $\delta^n(b) = 0$.
\end{definition}

It is well known that every $\BG_a$-action on an affine variety $X$ is induced by some locally nilpotent derivation $\delta$ on its algebra of regular functions~$\BK[X]$ (see~\cite[Section 1.5.1]{FR}). For every $t\in\BK$ the mapping $\exp{(t\delta)}$ is an automorphism of $\BK[X]$. Since $\exp((t+s)\delta) = \exp{(t\delta)}\exp{(s\delta)}$, it defines an action of $\BG_a$ on $\BK[X]$ and thus it defines a $\BG_a$-action on $X$.

Below we describe the construction of an $S$-pair that corresponds to an induced additive action on a normal projective variety in terms of derivations on its algebra of regular functions on the open $\BG_a^m$-orbit.

\begin{construction}\label{dersalgebra}
    Suppose $X \subseteq \BP^n$ is a normal subvariety with an induced additive action of~$\BG_a^m$. Then this closed embedding is induced by a subspace $V\subseteq H^0(X, \CO_X(D))$ for some very ample divisor $D$. We can assume that~$D$ is~$\BG_a^m$-invariant by Proposition~\ref{picaff}. Fix the coordinates~$(x_1,\dotsc,x_m)$ on the open $\BG_a^m$-orbit. Since the $\BG_a^m$-action on~$X$ is induced, it extends to a linear action on $H^0(X, \CO_X(D))$. One can check that under this extension,~$V$ is an ($n+1$)-dimensional~$\BG_a^m$-invariant subspace. Next, note that the vector space~$V$ consists of functions which have the form $f(x_1,\dotsc,x_m)$ on the open orbit, where $f$ is a polynomial. The~$\BG_a^m$-action on the open orbit is induced by $m$ commuting locally nilpotent derivations~$d_1,\dotsc,d_m$ on~$\BK[x_1,\dotsc,x_m]$. The subspace~$V$ is $\BG_a^m$-invariant and hence it is invariant under the derivations~$d_1,\dotsc,d_m$. The corresponding $S$-pair~$(A,U)$ has the following form: \begin{equation}
        A = \BK[d_1\big|_V,\dotsc,d_m\big|_V] \subseteq L(V), \ U = \bigoplus_{i=1}^m\BK d_i\big|_V.
    \end{equation}
\end{construction}

This construction is also described in~\cite[Theorem 2.6]{AZ}

\begin{remark}
    Every additive action determines a choice of coordinates on the open orbit in which it acts by translations, making the derivations $d_1,\dotsc,d_m$ simply $\partial/\partial x_1,\dotsc,\partial/\partial x_m$. But these coordinates might be inconvenient to work with. For example, the case of toric varieties is exactly like this.
\end{remark}

\section{Induced additive actions on toric varieties}

The following theorem is central to the theory of additive actions on toric varieties.

\begin{theorem}\cite[Theorem 3]{AR}\label{normandany}
    Let $X$ be a complete toric variety with an acting torus~$\BG_m^n$. The following conditions are equivalent:\begin{enumerate}
        \item[1)] X admits an additive action normalized by the acting torus $\BG_m^n$;
        \item[2)] X admits an additive action;
        \item[3)] a maximal unipotent subgroup of the group $\Aut(X)$ acts on $X$ with an open orbit.
    \end{enumerate}
\end{theorem}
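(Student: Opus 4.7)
The plan is to prove the chain of implications $(1)\Rightarrow(2)\Rightarrow(3)\Rightarrow(1)$. The implication $(1)\Rightarrow(2)$ is immediate from the definitions. For $(2)\Rightarrow(3)$, I would invoke the Demazure--Cox description of $\Aut(X_\Sigma)^\circ$ for a complete toric variety $X=X_\Sigma$: it is a connected linear algebraic group containing the acting torus $T=\BG_m^n$ as a maximal torus, and its Lie algebra decomposes $T$-equivariantly as $\lie(T)\oplus\bigoplus_\alpha \mathfrak{g}_\alpha$, where $\alpha$ ranges over the Demazure roots of $\Sigma$ and each $\mathfrak{g}_\alpha$ is one-dimensional. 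An additive action embeds $\BG_a^n$ as a unipotent subgroup of $\Aut(X)^\circ$, which therefore lies in some maximal unipotent subgroup $U'$; since $\BG_a^n$ has an open orbit, so does $U'\supseteq\BG_a^n$. All maximal unipotent subgroups of a connected linear algebraic group are conjugate, so the subgroup $U$ of statement~(3) also acts with an open orbit.

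The heart of the matter is $(3)\Rightarrow(1)$. We may take $U$ to be $T$-stable (since $T$ is a maximal torus). The plan is to construct inside $U$ a commutative $T$-stable subgroup $H\simeq\BG_a^n$ acting with an open orbit on $X$; such an $H$ automatically yields a normalized additive action. Since $U$ is generated by root subgroups $U_\alpha\simeq\BG_a$ on which $T$ acts by the character $\alpha$, and since the commutator of $U_\alpha$ and $U_\beta$ vanishes whenever $\alpha+\beta$ is not a Demazure root, it suffices to choose Demazure roots $\alpha_1,\dots,\alpha_n$ such that (a) no sum $\alpha_i+\alpha_j$ is a Demazure root, which forces the $U_{\alpha_i}$ to commute pairwise, and (b) the product $U_{\alpha_1}\cdots U_{\alpha_n}$ acts on $X$ with an open orbit. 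Condition (a), together with $T$-stability of each $U_{\alpha_i}$, will guarantee that this product is a commutative $T$-normalized subgroup isomorphic to $\BG_a^n$.

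The main obstacle is the combinatorial selection of such roots. I would linearise the $U$-action at a $T$-fixed point $p$ in the closure of the open $U$-orbit: the orbit map $\lie(U)\to T_pX$ is a surjective $T$-equivariant linear map, so some $n$-dimensional $T$-stable complement of its kernel has the shape $\mathfrak{g}_{\alpha_1}\oplus\cdots\oplus\mathfrak{g}_{\alpha_n}$. The remaining task is to show that these $n$ weights can be chosen so that no pairwise sum is a Demazure root; the idea is to exploit that Demazure roots are attached to rays of the fan $\Sigma$ and that the sums of two Demazure roots which happen to be Demazure roots again are severely restricted by the fan combinatorics, so that one may rescale or replace within each $T$-weight space --- for instance by restricting attention to roots whose ``level'' lies in a suitable half-space of the character lattice --- without destroying the open orbit while ensuring condition~(a). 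Making this perturbation argument precise inside the Demazure root combinatorics of $\Sigma$ is the technical heart of the proof.
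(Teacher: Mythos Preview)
The paper contains no proof of this statement: it is quoted as \cite[Theorem~3]{AR} and used as a black box throughout. There is therefore nothing in the present paper to compare your attempt against.

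For what it is worth, your chain $(1)\Rightarrow(2)\Rightarrow(3)$ is correct and standard. For $(3)\Rightarrow(1)$ you have the right strategic shape --- the argument in \cite{AR} does construct $n$ pairwise-commuting Demazure root subgroups whose product is a $T$-normalized copy of $\BG_a^n$ acting with an open orbit (what they call a \emph{complete collection} of Demazure roots) --- but you have left the actual selection of these roots as an unspecified ``perturbation argument''. That selection is the entire content of the implication in \cite{AR}, and it is carried out there by a direct combinatorial analysis of the fan $\Sigma$: one shows that the existence of an open $U$-orbit forces a particular ordering of the rays, and from that ordering one writes down explicit Demazure roots $e_1,\dots,e_n$ with the required properties. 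Your proposed route via linearisation at a $T$-fixed point does produce $n$ roots whose root subgroups span the tangent space, but gives no control on whether pairwise sums $\alpha_i+\alpha_j$ are again Demazure roots; the suggestion to ``rescale or replace within each $T$-weight space'' is not a well-defined operation on Demazure roots (each weight space here is one-dimensional), and replacing a root by another one in some half-space may well destroy the open-orbit condition. So the proposal identifies the correct objects but the decisive combinatorial step is genuinely missing, and the mechanism you sketch for filling it does not obviously work.
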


As we mentioned in the introduction, any two normalized additive actions on a toric variety are isomorphic. One can see the proof of this fact in~\cite[Theorem 2]{AR}.

\begin{remark}\label{normaction}
    It is easy to check directly that for toric varieties with normalized additive action, the open $\BG_m^n$-orbit lies in the open $\BG_a^n$-orbit. Also, the divisors lying in the complement of the open $\BG_a^n$-orbit are $\BG_m^n$-invariant since the torus acts on them by permutations, but this action is trivial because a torus is a connected group.
\end{remark}

We remind the reader that a group of automorphisms of a complete toric variety is an affine algebraic group; see \cite{DM}.

\begin{proposition}\label{openorbit}
    Let $X$ be a complete toric variety that admits an additive action. Then there exists an equivalent additive action such that its open orbit contains an open torus orbit.
\end{proposition}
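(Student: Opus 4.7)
\emph{Proof plan.}
Let $\alpha$ denote the given additive action of $\BG_a^n$ on $X$ with open orbit $O$, and let $T$ be the acting torus with open orbit $O_T$. Both $O$ and $O_T$ are open and dense in $X$, so $O\cap O_T\neq\emptyset$. Write $X\setminus O=D_1\cup\dots\cup D_k$ for the $\BG_a^n$-invariant boundary divisors, and $X\setminus O_T=E_1\cup\dots\cup E_r$ for the torus-invariant ones. For any $\phi\in\Aut(X)$, the conjugate action $g\mapsto\phi\circ\alpha(g,\cdot)\circ\phi^{-1}$ is equivalent to $\alpha$ and has open orbit $\phi(O)$. Hence the proposition amounts to finding $\phi\in\Aut(X)$ such that each $\phi(D_i)$ is one of the $E_j$, i.e., so that each boundary divisor of the conjugated action is torus-invariant.

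My plan is to produce such a $\phi$ by transporting $\alpha$ into a normalized configuration. By Theorem~\ref{normandany}, $X$ admits a normalized additive action $\alpha_0$ by some subgroup $G_0\subseteq\Aut(X)$, and by Remark~\ref{normaction} the boundary divisors of its open orbit $O_0$ are torus-invariant, so $O_0\supseteq O_T$. Both $\BG_a^n$ and $G_0$ are $n$-dimensional unipotent subgroups of $\Aut(X)^0$ acting with an open orbit. Using the conjugacy of maximal unipotent subgroups in the connected component $\Aut(X)^0$, I would first conjugate $\BG_a^n$ so that it lies in the same maximal unipotent subgroup $U$ as $G_0$. Using the Cox--Demazure description of $\Aut(X)^0$ as generated by $T$ and Demazure root subgroups indexed by the fan of $X$, a further conjugation inside $U$, possibly composed with an element of the normalizer of $T$, is then used to align the open orbit of the conjugated $\BG_a^n$ with a toric affine chart, which automatically contains $O_T$.

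The main obstacle is this final alignment step. Distinct $n$-dimensional commutative unipotent subgroups of $U$ acting with an open orbit need not be conjugate in $\Aut(X)$---they can correspond to non-isomorphic $S$-pairs via Theorem~\ref{correspondence}---so one cannot simply conjugate $\BG_a^n$ onto $G_0$. The point is that only the open \emph{orbit}, not the underlying subgroup, needs to be in the right position: composing with suitable translations by Demazure root subgroups, one shows that each boundary divisor $D_i$ can be carried onto a torus-invariant divisor without changing the isomorphism class of the $\BG_a^n$-subgroup, yielding the desired equivalent additive action.
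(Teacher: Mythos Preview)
Your setup is on target: conjugating by $\varphi\in\Aut(X)$ gives an equivalent action with open orbit $\varphi(O)$, and using conjugacy of maximal unipotent subgroups to move the given $\BG_a^n$ into the same maximal unipotent $U$ that contains the normalized $G_0$ is exactly the right first move. But the ``main obstacle'' you then describe is not an obstacle at all, and your proposed resolution via Demazure root translations is both vague and unnecessary.

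The missing observation is this: once two copies of $\BG_a^n$ lie inside the \emph{same} maximal unipotent subgroup $U\subseteq\Aut(X)$ and each acts on $X$ with an open orbit, those open orbits automatically coincide --- they are both equal to the open $U$-orbit. Indeed, every orbit of a unipotent group on a quasi-projective variety is closed in its ambient space and isomorphic to an affine space; hence the open $U$-orbit is $\BA^n$, and it contains the open $\BG_a^n$-orbit, which is also $\BA^n$. But $\BA^n$ cannot sit inside $\BA^n$ as a proper open subset (compare units in the coordinate rings), so the two orbits are equal. Applying this twice --- once to the conjugated $\BG_a^n$ inside $U$, once to $G_0$ inside $U$ --- shows that after the single conjugation step the open orbit already equals the open $G_0$-orbit $O_0$, which contains $O_T$ by Remark~\ref{normaction}. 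No further alignment, no Cox--Demazure description, and no root-subgroup gymnastics are needed.

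In short: keep your first paragraph and the conjugacy-of-maximal-unipotents step, then replace the entire ``alignment'' discussion by the one-line argument that an $n$-dimensional unipotent subgroup with an open orbit inside $U$ has the \emph{same} open orbit as $U$.
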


\begin{proof}
    Denote by $\widetilde{\BG_a^n} \subseteq \Aut(X)$ a subgroup corresponding to some additive action and by $\BG_a^n \subseteq \Aut(X)$ corresponding to a normalized one. Let $U, \widetilde{U} \subseteq \Aut(X)$ be maximal unipotent subgroups containing $\BG_a^n$ and $\widetilde{\BG_a^n}$ respectively. They act on $X$ with the same open orbits as $\BG_a^n$ and $\widetilde{\BG_a^n}$. It follows from the fact that orbits of a unipotent groups are isomorphic to an affine space, and an affine space cannot contain another affine space as a proper open subset. Since all maximal unipotent subgroups are conjugate in an affine algebraic group, we can find an automorphism~$\varphi\in\Aut(X)$ such that~$\varphi^{-1}\widetilde{U}\varphi=U$. Consider the following equivalent additive action of $\BG_a^n$ on $X$:\begin{equation*}
        g\cdot x = \varphi^{-1}(g\cdot\varphi(x)).
    \end{equation*} This action has the same open orbit as $\widetilde{\BG_a^n}$ and thus contains an open $\BG_m^n$-orbit.
\end{proof}

For the normalized additive action we can choose coordinates $(x_1,\dotsc,x_n)$ on the open~$\BG_a^n$-orbit in which~$\BG_a^n$ acts by shifts (i.e. the action is induced by~$n$ commuting homogeneous derivations~$\partial/\partial x_1,\dotsc, \partial/\partial x_n$) and coordinates $(t_1,\dotsc,t_n)$ on $\BG_m^n$ such that $\BG_m^n$ acts on the open orbit by coordinate-wise multiplication. One can see the proof in~\cite[Section 4.1]{AZ}. We call such coordinates (on both open orbit and an acting torus) \emph{standard}.

\begin{definition}\label{monomialdefinition}
    A finitely generated $\BK$-algebra $A$ is called \emph{monomial} if $A$ is isomorphic to the quotient of the ring of polynomials in $n$ variables $\BK[x_1,\dotsc,x_n]/I$, where $I$ is a \emph{monomial} ideal, i.e. it is generated by monomials. 
\end{definition}

\begin{proposition}\label{monomialhomo}
    Let~$A=\BK[x_1,\dotsc,x_n]/I$ be a monomial finite-dimensional local algebra and $U=\langle x_1, \dotsc, x_n\rangle$ be the subspace spanned by variables. Then a variety $X$ with an induced additive action corresponding to the~$S$-pair~$(A, U)$ admits a torus action with an open orbit, and the additive action is normalized by the acting torus.
\end{proposition}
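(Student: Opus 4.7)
The plan is to exploit the $\BZ^n$-grading that monomiality of $I$ puts on $A$, realize the torus as the group of graded automorphisms scaling the variables, and then verify mechanically that this torus action descends to $X$ and normalizes the $\BG_a^n$-action furnished by Theorem~\ref{correspondence}.

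First I would set up the torus. Because $I$ is a monomial ideal, it is homogeneous with respect to the multigrading on $\BK[x_1,\dotsc,x_n]$ in which $\deg(x_i)=e_i\in\BZ_{\geq 0}^n$, so the quotient inherits a decomposition $A=\bigoplus_{\alpha}A_\alpha$ with $A_0=\BK\cdot 1$ and $A_{e_i}\ni x_i$. Define an action of $T=\BG_m^n$ on $A$ by letting $s=(s_1,\dotsc,s_n)$ act as $s\cdot a=s^\alpha a$ on $A_\alpha$. Each $s\in T$ is a $\BK$-algebra automorphism of $A$, hence a linear automorphism of the underlying vector space, so this yields a regular $T$-action on $\BP(A)\cong\BP^N$ where $N=\dim A-1$.

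Next I would verify the normalization property. By Theorem~\ref{correspondence}, the $\BG_a^n$-action on $\BP(A)$ is induced by the linear operators $v\mapsto\exp(u)\cdot v$ for $u\in U$. Since $s\in T$ is an algebra automorphism of $A$ preserving the spanning set $\{x_1,\dotsc,x_n\}$ of $U$, we have $s(U)=U$ and $s(\exp(u)v)=\exp(s(u))\,s(v)$, which rewrites as the conjugation identity
\begin{equation*}
    s\circ\exp(u)\circ s^{-1}=\exp(s(u)).
\end{equation*}
Because $s(u)\in U$, the right hand side again belongs to the image of $\BG_a^n$, so $T$ normalizes $\BG_a^n$ inside $\Aut(\BP(A))$.

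Finally I would transfer the action to $X$ and produce the open orbit. Since $1\in A_0$, the point $[1]\in\BP(A)$ is $T$-fixed, and normalization gives $T\cdot(\BG_a^n\cdot[1])=\BG_a^n\cdot(T\cdot[1])=\BG_a^n\cdot[1]$. Therefore $T$ preserves the open $\BG_a^n$-orbit and hence its closure $X$, which gives the sought regular $T$-action on $X$ normalizing $\BG_a^n$. For the open orbit, the map $\psi:U\to\BG_a^n\cdot[1]$, $u\mapsto[\exp(u)\cdot 1]$, is an isomorphism (it is the standard parametrization of the open orbit from Theorem~\ref{correspondence}); by construction $\psi$ intertwines the scaling action of $T$ on $U\cong\BK^n$ with the $T$-action on $\BG_a^n\cdot[1]$, and the scaling action on $\BK^n$ has the open orbit $(\BK^\times)^n$, so $T$ acts on $X$ with an open orbit.

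The argument is essentially bookkeeping built on the grading; the one spot that must be handled precisely is the conjugation identity in the second step, since it is exactly the point where the monomial structure of $I$ (rather than just, say, a grading on the ambient polynomial ring) is used to make $T$ preserve $U$ and commute through the exponential. Everything else — openness of the $T$-orbit, descent to $X$, and fixing of $[1]$ — follows once that identity is in place.
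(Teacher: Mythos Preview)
Your proof is correct and follows essentially the same approach as the paper: both define the torus action on $A$ by scaling each monomial $x_1^{i_1}\cdots x_n^{i_n}$ by $t_1^{i_1}\cdots t_n^{i_n}$, then check that this action is compatible with the $\BG_a^n$-action and has an open orbit inside the open $\BG_a^n$-orbit. The only cosmetic difference is that the paper writes out $\exp(U)\cdot 1$ explicitly with multinomial coefficients and observes directly that the torus orbit of the point $\pi\big(\sum\frac{(i_1+\dotsc+i_n)!}{i_1!\cdots i_n!}x^I\big)$ coincides with $\pi(\exp(U)\cdot 1)$, whereas you phrase the same fact via the intertwining $\psi(s\cdot u)=s\cdot\psi(u)$ and the conjugation identity $s\circ\exp(u)\circ s^{-1}=\exp(s(u))$; the underlying mechanism is identical.
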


\begin{proof}
    Let~$\pi:A\backslash\{0\}\to\BP(A)$ be the canonical projection morphism. By Theorem~\ref{correspondence} we have~$X=\overline{\pi(\exp(U)\cdot1)}\subset\BP(A)$. We have \begin{equation}\label{firstequation}
        \exp(U)\cdot1 = \Bigg\{\sum \frac{1}{i_1!\dotsc i_n!}\alpha_1^{i_1}\dotsc \alpha_n^{i_n}x_1^{i_1}\dotsc x_n^{i_n}, \ \alpha_1,\dotsc,\alpha_n \in \BK\Bigg\}.
    \end{equation}
    Consider the following linear representation of~$\BG_m^n$ on~$A$: \begin{equation}\label{secondequation}
        (t_1, \dotsc, t_n) \circ x_1^{i_1}\dotsc x_n^{i_n} = t_1^{i_1}\dotsc t_n^{i_n}x_1^{i_1}\dotsc x_n^{i_n}, \ (t_1,\dotsc,t_n) \in \BG_m^n.
    \end{equation}
    This action induces an action on~$\BP(A)$ and commutes with the projection~$\pi$. It is not hard to see from~(\ref{firstequation}) and~(\ref{secondequation}) that the closure of the $\BG_m^n$-orbit of the point~$\pi(\sum\frac{1}{i_1!\dotsc i_n!}x_1^{i_1}\dotsc x_n^{i_n})$ coincides with~$X$. This means that~$X$ admits a torus action with an open orbit. This action is normalized because, in coordinates induced by the action of~$\BG_a^n$, the torus acts by coordinate-wise multiplication.
\end{proof}

The following example shows that the corresponding variety might not be normal.

\begin{example}\label{nonnormalinduced}
    Consider the $S$-pair $A = \BK[t,s]/(t^3,ts,s^3)$ and $U = \BK t \oplus \BK s$. We have \begin{equation*}
        \exp(U)\cdot 1 = \Big\{1 + at + bs + \frac{a^2}{2}t^2+\frac{b^2}{2}s^2 \ \Big| \ a,b\in\BK\Big\},
    \end{equation*} so the corresponding variety with an induced additive action is the closure of the set \begin{equation*}
        \Big\{\Big[1:a:b:\frac{a^2}{2}:\frac{b^2}{2}\Big], \ a,b\in\BK\Big\} \subseteq \BP^4
    \end{equation*} in homogeneous coordinates in $\BP^4$. Denote this closure by $X$. Indeed, $X$ is the intersection of two quadrics \begin{gather}\label{quadriceq}
        \begin{cases}
            z_1^2 = 2z_0z_3, \\
            z_2^2 = 2z_0z_4,
        \end{cases}
    \end{gather} because the variety given by these equations and $X$ coincide on the open affine chart and both are irreducible. In the affine chart $z_3\neq0$ in coordinates \begin{equation*}
        x_1 = z_0/z_3, \ x_2 = z_1/z_3, \ x_3 = z_2/z_3, \ x_4 = z_4/z_3,
    \end{equation*} the equations (\ref{quadriceq}) have the following form: \begin{gather*}
        \begin{cases}
            x_2^2 = 2x_1, \\
            x_3^2 = 2x_1x_4,
        \end{cases}
    \end{gather*} and it is easy to see that it has a line of singular points $x_1=x_2=x_3=0$. Thus, the complement of the open $\BG_a^2$-orbit in $X$ consists of singular points. Its codimension equals~1, and hence $X$ is not normal. 
\end{example}

Now we look at the opposite situation. Consider a projective toric variety with a normalized additive action and its $\BG_a^n$-equivariant embedding in projective space. What can we say about the corresponding $S$-pair? We answer this question in the case when the variety is linearly normal.

\begin{proposition}\cite[Proposition 4.3.2]{C}\label{sectionstoric}
    Let $X$ be a toric variety with the action of $\BG_m^n$. If~$D$ is a $\BG_m^n$-invariant Weil divisor on X, then \begin{equation}
        H^0(X, \CO_X(D)) = \bigoplus_{\div(\chi^m)+D\geq0}\BK\cdot\chi^m,
    \end{equation} where $\chi^m = \chi^{(m_1,\dotsc,m_n)}$ are the characters of $\BG_m^n$.
\end{proposition}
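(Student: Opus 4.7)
The plan is to regard both sides of the stated equality as subspaces of the function field $K(X)$ and to match them through the weight-space decomposition of the $\BG_m^n$-action on $H^0(X, \CO_X(D))$.

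First, I would equip $H^0(X, \CO_X(D))$ with a $\BG_m^n$-action. Since $D$ is torus-invariant, for any $t \in \BG_m^n$ and any nonzero rational function $f$ with $\div(f) + D \geq 0$ one has $\div(t \cdot f) + D = t \cdot (\div(f) + D) \geq 0$, so translation by $t$ preserves $H^0(X, \CO_X(D))$. This representation is finite-dimensional (completeness of $X$) and rational, so it decomposes as a direct sum of weight subspaces $V_m$ indexed by characters $\chi^m$ of $\BG_m^n$, where $m \in \BZ^n$.

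Second, I would identify each weight subspace with $\BK \cdot \chi^m$. The restriction map from $H^0(X, \CO_X(D))$ to $\BK[T]$, where $T \simeq \BG_m^n$ is the open torus orbit, is injective because $T$ is dense in $X$. Moreover, $\BK[T] = \bigoplus_{m \in \BZ^n} \BK \cdot \chi^m$ is the weight decomposition of the coordinate ring of the torus under translation, and each weight appears with multiplicity one. Therefore any weight-$m$ element of $H^0(X, \CO_X(D))$ restricts to a scalar multiple of $\chi^m$, giving $V_m \subseteq \BK \cdot \chi^m$. Conversely, a character $\chi^m$ viewed as a rational function on $X$ through the inclusion $T \subseteq X$ belongs to $H^0(X, \CO_X(D))$ precisely when $\div(\chi^m) + D \geq 0$. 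Assembling the two inclusions yields the claimed formula.

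The main point that deserves care is the compatibility between the abstract weight decomposition on $H^0(X, \CO_X(D))$ and the concrete character decomposition of $\BK[T]$: one needs the invariance of $D$ to make the torus act on $H^0(X, \CO_X(D))$ at all, and the density of the open orbit together with the simplicity of the weights in $\BK[T]$ to conclude that each $V_m$ is at most one-dimensional. Once this matching is in place, everything else reduces to the defining description of $H^0(X, \CO_X(D))$ as those rational functions $f$ with $\div(f) + D \geq 0$.
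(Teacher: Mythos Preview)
The paper does not supply its own proof of this proposition; it simply cites \cite[Proposition 4.3.2]{C}. Your argument is precisely the standard one found there: make the torus act on $H^0(X,\CO_X(D))$ via the invariance of $D$, decompose into weight spaces, and match each weight space to a single character through the injective restriction to $\BK[T]$.

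One small point deserves adjustment. You justify the weight decomposition by saying the representation is finite-dimensional ``(completeness of $X$)'', but the proposition as stated does not assume $X$ is complete, and indeed $H^0(X,\CO_X(D))$ can be infinite-dimensional. The correct argument is that the $\BG_m^n$-action on $H^0(X,\CO_X(D))$ is a rational (equivalently, locally finite) representation, and any such representation of a torus---regardless of dimension---decomposes as a direct sum of one-dimensional weight spaces. With that replacement, your proof goes through verbatim and coincides with the referenced proof.
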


\begin{proposition}\label{toricsections}
    Let $X$ be a toric variety with a normalized additive action. Let~$D$ be a divisor on~$X$ and~$\supp(D)$ lie in the complement of the open~$\BG_a^n$-orbit. Choose the standard coordinates on the open orbit and an acting torus. \begin{enumerate}
        \item[1)] If $\chi^{(m_1,\dotsc,m_n)}\in H^0(X, \CO_X(D))$, then $m_i \geq 0$ \ for \ $i=1,\dotsc,n$;
        \item[2)]if $\chi^{(m_1,\dotsc,m_n)}\in H^0(X, \CO_X(D))$, then $\chi^{(k_1,\dotsc,k_n)} \in H^0(X, \CO_X(D))$ for all tuples~$(k_1,\dotsc,k_n)$ with $0\leq k_j\leq m_j$.
    \end{enumerate}
\end{proposition}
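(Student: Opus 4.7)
The plan is to combine the character decomposition of $H^0(X,\CO_X(D))$ given by Proposition~\ref{sectionstoric} with the $\BG_a^n$-stability of this space, treating both items as statements about the set of exponents
$$P_D\cap\BZ^n=\{m=(m_1,\dotsc,m_n)\in\BZ^n \ :\ \div(\chi^m)+D\ge 0\}$$
of characters that appear in $H^0(X,\CO_X(D))$.

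I would first observe that $D$ is both $\BG_m^n$- and $\BG_a^n$-invariant: each prime component of $\supp(D)$ lies in the complement of the open $\BG_a^n$-orbit, hence is $\BG_m^n$-invariant by Remark~\ref{normaction}, and since $\BG_a^n$ is connected it can only permute these finitely many components trivially. Proposition~\ref{sectionstoric} then expresses $H^0(X,\CO_X(D))$ as the direct sum of one-dimensional weight spaces $\BK\cdot\chi^m$ for $m\in P_D\cap\BZ^n$, and on the open $\BG_a^n$-orbit each character restricts to the monomial $x_1^{m_1}\cdots x_n^{m_n}$ in the standard coordinates.

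Part~(1) then follows from a regularity argument: any $\chi^m\in H^0(X,\CO_X(D))$ is a rational function regular on $X\setminus\supp(D)$, which contains the open $\BG_a^n$-orbit $\simeq\BA^n$, and the monomial $x_1^{m_1}\cdots x_n^{m_n}$ is regular on $\BA^n$ only when each $m_i\ge 0$. For Part~(2), the $\BG_a^n$-invariance of $D$ combined with the identity $\div(g\cdot f)=g\cdot\div(f)$ for $g\in\BG_a^n$ and the description $H^0(X,\CO_X(D))=\{f\in K(X)\ :\ \div(f)+D\ge 0\}$ shows that $H^0(X,\CO_X(D))$ is stable under the $\BG_a^n$-action on rational functions; passing to the Lie algebra, it is preserved by the derivations $\partial/\partial x_1,\dotsc,\partial/\partial x_n$ that generate the action on the open orbit. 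Thus, for $\chi^m\in H^0(X,\CO_X(D))$ and $0\le k_j\le m_j$, applying the composition $\prod_i(\partial/\partial x_i)^{m_i-k_i}$ to $x_1^{m_1}\cdots x_n^{m_n}$ produces a nonzero scalar multiple of $x_1^{k_1}\cdots x_n^{k_n}$, whence $\chi^k\in H^0(X,\CO_X(D))$.

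The main thing to justify carefully is the $\BG_a^n$-stability of $H^0(X,\CO_X(D))$, i.e. that the $\BG_a^n$-action on $K(X)$ really preserves the subspace cut out by $\div(f)+D\ge 0$. Once this is in place, the argument becomes an elementary manipulation of monomials on the open orbit, since the character decomposition makes each claim a coordinate-wise statement about the exponents.
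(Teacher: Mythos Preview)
Your proposal is correct and follows essentially the same route as the paper's proof: both argue that the $\BG_a^n$-invariance of $D$ makes $H^0(X,\CO_X(D))$ a $\BG_a^n$-representation, then use the partial derivatives $\partial/\partial x_i$ (the Lie-algebra action in standard coordinates) to pass from $\chi^m$ to any $\chi^k$ with $0\le k_j\le m_j$, while Part~(1) is in both cases the observation that $\chi^m$ must be regular on the open orbit $\simeq\BA^n$. Your write-up is a bit more explicit about why $D$ is $\BG_m^n$- and $\BG_a^n$-invariant and why the global sections are preserved, but the underlying argument is identical.
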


\begin{proof}
    1) Every $m_i$ is non-negative since $\chi^m$ is regular on the open $\BG_a^n$-orbit.
    
    2) Denote by $(x_1,\dotsc,x_n)$ the standard coordinates. Since~$D$ lies in the complement of the open~$\BG_a^n$-orbit,~$D$ is~$\BG_a^n$-invariant. This yields a representation of~$\BG_a^n$ in~$H^0(X,\CO_X(D))$. But since~$\BG_a^n$ acts by shifts in coordinates~$(x_1,\dotsc,x_n)$, then~$\partial\chi^m/\partial x_i~\in~H^0(X,\CO_X(D))$ for~$i=1,\dotsc,n$. This finishes the proof.
\end{proof}

Further we consider the case when equivariant embeddings are induced by complete linear systems, so the following proposition will be very useful.

\begin{proposition}\cite[Section 2]{AP}\label{linnormind}
    Let $X\subseteq\BP^n$ be a normal and linearly normal subvariety that admits an additive action. Then this action is induced.
\end{proposition}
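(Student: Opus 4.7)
The plan is to lift the given $\BG_a^m$-action on $X$ to a linear action on the $(n+1)$-dimensional space of hyperplane sections and then descend it to $\BP^n$; linear normality is exactly what ensures no information is lost in this transfer.

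First, I would produce a $\BG_a^m$-invariant representative of the hyperplane class. Since $X$ is normal, $\Pic(X)$ embeds in $\Cl(X)$, so $\CO_{\BP^n}(1)\big|_X\simeq\CO_X(D)$ for some Cartier divisor $D$. The open $\BG_a^m$-orbit of an additive action is isomorphic to $\BA^m$ (the stabiliser of a generic point in $\BG_a^m$ is a $0$-dimensional closed, hence trivial, subgroup), so Proposition~\ref{picaff} and the remark following it apply: the prime divisors lying in the complement of the open orbit are $\BG_a^m$-invariant and generate $\Cl(X)$ freely. Hence I can replace $D$ within its linear equivalence class by a divisor supported off the open orbit, and such a $D$ is automatically $\BG_a^m$-invariant.

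Second, I would lift the action on $X$ to a $\BG_a^m$-linearization of $\CO_X(D)$, which yields a rational finite-dimensional $\BG_a^m$-representation on $H^0(X,\CO_X(D))$. The inputs here are $\Pic(\BG_a^m)=0$, which kills the obstruction to linearizing an invariant line bundle, and the triviality of $\homo(\BG_a^m,\BG_m)$, which implies essential uniqueness of the linearization. Third, I would invoke linear normality: after replacing $\BP^n$ by the linear span of $X$ (so that $X$ is non-degenerate), the restriction map $H^0(\BP^n,\CO_{\BP^n}(1))\to H^0(X,\CO_X(D))$ becomes an isomorphism of $(n+1)$-dimensional vector spaces. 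Transporting the $\BG_a^m$-representation across this isomorphism produces a linear $\BG_a^m$-action on $H^0(\BP^n,\CO_{\BP^n}(1))$, hence a regular $\BG_a^m$-action on $\BP^n$ with respect to which the embedding $X\hookrightarrow\BP^n$ is equivariant by construction, so the original action is induced.

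The step I expect to be the main obstacle is the linearization in the second paragraph: one has to produce a genuine $\BG_a^m$-linearization of $\CO_X(D)$, not merely of some tensor power. The decisive point is that $\BG_a^m$ is connected, unipotent and simply connected with no non-trivial characters, so the standard obstruction theory applies directly; one has to be careful in checking that the class of $\CL$ itself (and not just a multiple) is $\BG_a^m$-invariant, which is why arranging a $\BG_a^m$-invariant representative $D$ in the first step is essential.
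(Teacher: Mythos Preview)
The paper does not prove this proposition; it is quoted with a citation to \cite[Section~2]{AP} and then used as a black box in the proof of Theorem~\ref{algebranorm}. So there is no in-paper argument to compare against, and your outline is in fact the standard argument one finds in the cited source.

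Your proof is correct. One comment: the obstacle you flag in the last paragraph is already dissolved by your first step. Once $D$ is chosen with support in the complement of the open orbit, it is $\BG_a^m$-invariant \emph{as a divisor}, not just as a class. Then $\BG_a^m$ acts on $K(X)$ by pullback and automatically preserves the subspace
\[
H^0(X,\CO_X(D))=\{f\in K(X)\mid \div(f)+D\ge 0\}\cup\{0\},
\]
giving the desired linear representation directly, with no appeal to the general obstruction theory of linearizations. This is exactly the mechanism behind Construction~\ref{dersalgebra} in the paper. A second minor point: linear normality gives only surjectivity of $H^0(\BP^n,\CO_{\BP^n}(1))\to H^0(X,\CO_X(D))$, not bijectivity, so you should either assume $X$ non-degenerate (as Theorem~\ref{correspondence} does) or, as your parenthetical indicates, extend the $\BG_a^m$-action trivially on a complement of the kernel; both are fine.
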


Now we are ready to prove the main result of this section. 

\begin{theorem}\label{algebranorm}
    Let $X$ be a complete toric variety with a normalized additive action and~$D$ be a very ample divisor on $X$ lying in the complement of the open $\BG_a^n$-orbit. Consider a linearly normal embedding in~$\BP^N$ with the invertible sheaf~$\CO_X(D)$. Then the additive action is induced and the corresponding $S$-pair~$(A,U)$ has the following form:\begin{gather}
        A =  H^0(X, \CO_X(D)) = \bigoplus_{\div(\chi^m)+D\geq0}\BK\chi^m; \\
        U = \BK\chi^{(1,0,\dotsc,0)}\oplus\BK\chi^{(0,1,\dotsc,0)}\oplus\dotsc\oplus\BK\chi^{(0,0,\dotsc,1)},
    \end{gather} where the multiplication of the characters is the standard multiplication and $\chi^m\cdot\chi^{m'} = 0$ in the case $\chi^{m+m'} \notin A$. In particular, the algebra $A$ is monomial.
\end{theorem}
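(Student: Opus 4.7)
The plan is to apply Construction~\ref{dersalgebra} and then unpack it in the torus-invariant basis supplied by Proposition~\ref{sectionstoric}. The very first assertion — that the action is induced — is immediate from Proposition~\ref{linnormind}, since a toric variety is normal and an embedding by the complete linear system $|D|$ is linearly normal. Next, by Remark~\ref{normaction} the open $\BG_m^n$-orbit sits inside the open $\BG_a^n$-orbit, so $\supp(D)$ lies in the torus-invariant complement of the open torus orbit; its components are permuted by the connected group $\BG_m^n$ and therefore individually fixed, so $D$ is $\BG_m^n$-invariant. Proposition~\ref{sectionstoric} then gives the stated character basis of $V := H^0(X,\CO_X(D))$, indexed by a finite set $M \subseteq \BZ_{\geq 0}^n$ (non-negativity comes from Proposition~\ref{toricsections}(1)); on the open orbit, in the standard coordinates $(x_1,\dotsc,x_n)$, the character $\chi^m$ is simply the monomial $x_1^{m_1}\cdots x_n^{m_n}$, and by Proposition~\ref{toricsections}(2) the set $M$ is downward closed under the componentwise order.

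Construction~\ref{dersalgebra} now realises $A$ as the subalgebra of $L(V)$ generated by the operators $d_i|_V = (\partial/\partial x_i)|_V$, with $U = \bigoplus_i \BK d_i|_V$. For $k \in \BZ_{\geq 0}^n$, set $D_k := d_1^{k_1}\cdots d_n^{k_n}|_V$, so $D_{e_i} = d_i|_V$. A direct computation (with multi-index notation $m! := \prod_i m_i!$) gives
\begin{equation*}
    D_k(\chi^m) = \frac{m!}{(m-k)!}\,\chi^{m-k} \quad\text{if } m \geq k \text{ componentwise},\qquad D_k(\chi^m)=0 \text{ otherwise},
\end{equation*}
so $D_k \cdot D_{k'} = D_{k+k'}$ as operators on $V$. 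Combined with downward closure of $M$, one checks that $D_k|_V = 0$ iff no $\chi^m \in M$ dominates $k$, iff $\chi^k \notin M$.

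The core of the argument is to show that $\{D_k : \chi^k \in M\}$ is a basis of $A$. For linear independence, if $\sum_{\chi^k \in M} a_k D_k = 0$, I would pick a $\leq$-maximal element $m^* \in M$ and evaluate at $\chi^{m^*}$: by downward closure every $k \leq m^*$ lies in $M$, and the images are distinct nonzero scalar multiples of the linearly independent characters $\chi^{m^*-k}$, forcing $a_k=0$ for all $k\leq m^*$; ranging over all maximal $m^*$ in $M$ eliminates every coefficient. Spanning holds because the set above is already closed under multiplication, contains $D_0 = \mathrm{id}$ and the generators $D_{e_i}$, and has the right cardinality $|M| = \dim V = N+1$ predicted by Theorem~\ref{correspondence}. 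The linear isomorphism $D_k \mapsto \chi^k$ then becomes a ring isomorphism: the identity $D_k D_{k'} = D_{k+k'}$ paired with $D_{k+k'}|_V = 0 \iff \chi^{k+k'}\notin M$ matches precisely the monomial multiplication rule; it also carries $U = \bigoplus \BK D_{e_i}$ onto $\bigoplus \BK \chi^{e_i}$. Finally, $A$ is monomial because it equals $\BK[y_1,\dotsc,y_n]/I$ where $I$ is generated by those monomials $y^k$ with $\chi^k \notin M$, and this is a monomial ideal because the complement of a downward closed subset of $\BZ_{\geq 0}^n$ is upward closed.

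The main obstacle is the bookkeeping around the multiplication table: the identity $D_k D_{k'} = D_{k+k'}$ is painless as operators, but the heart of the theorem is the double use of downward closure — once to argue $D_{k+k'}|_V = 0 \iff \chi^{k+k'}\notin M$, which converts the operator identity into the correct monomial algebra relation, and once to obtain linear independence by reducing to a single maximal $m^*$.
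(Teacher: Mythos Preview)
Your proof is correct and follows essentially the same route as the paper: both apply Proposition~\ref{linnormind} to get that the action is induced, use Proposition~\ref{toricsections} to obtain the downward-closed monomial basis of $V=H^0(X,\CO_X(D))$, and then invoke Construction~\ref{dersalgebra} to identify the $S$-pair with the algebra generated by the restrictions $\partial/\partial x_i|_V$. The only difference is that the paper packages the final step as an ``obvious'' lemma --- that for a monomial algebra $B=\BK[s_1,\dotsc,s_n]/I$ the map $s_i\mapsto\partial/\partial s_i|_B$ is an isomorphism onto $\BK[\partial/\partial s_1|_B,\dotsc,\partial/\partial s_n|_B]$ --- whereas you unpack this explicitly via the operators $D_k$, verifying linear independence by evaluation at maximal characters and spanning by closure under multiplication together with the dimension count from Theorem~\ref{correspondence}.
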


\begin{proof}
    The action is induced by Proposition~\ref{linnormind}. Since $D$ lies in the complement of the open $\BG_a^n$-orbit, it is invariant under both actions. By Proposition~\ref{toricsections} if~$\chi^{(m_1,\dotsc,m_n)}~\in~H^0(X, \CO_X(D))$, then~$\chi^{(k_1,\dotsc,k_n)}\in A$ for~$0\leq k_j\leq m_j$. This yields the structure of the monomial algebra on~$A$. Fix the torus coordinates~$(t_1,\dotsc,t_n)$ on the open~$\BG_a^n$-orbit. By Construction~\ref{dersalgebra} the corresponding~$S$-pair is $\BK[\partial/\partial t_1\big|_A,\dotsc,\partial/\partial t_n\big|_A]$. The following lemma completes the proof.

    \begin{lemma}
        Let $B = \BK[s_1,\dotsc,s_n]/I$ be a monomial algebra with respect to a monomial ideal~$I$. Then the algebra~$\BK[\partial/\partial s_1\big|_B,\dotsc,\partial/\partial s_n\big|_B]$ is isomorphic to $B$.
    \end{lemma}

    \begin{proof}
        Denote by $\partial_i\coloneq\partial/\partial s_i|_B$. First, we prove that~$\partial_1^{k_1}\dotsc\partial_n^{k_n}= 0$ if and only if~$s_1^{k_1}\dotsc s_n^{k_n} =0$  in $B$. 

        Suppose $\partial_1^{k_1}\dotsc\partial_n^{k_n} = 0$. This immediately yields $s_1^{k_1}\dotsc s_n^{k_n} = 0$, since otherwise\begin{equation*}
            \partial_1^{k_1}\dotsc\partial_n^{k_n}(s_1^{k_1}\dotsc s_n^{k_n})= k_1!\dotsc k_n!\neq0.
        \end{equation*}
        Conversely, if $s_1^{k_1}\dotsc s_n^{k_n} = 0$ in $B$, then, since $B$ is a monomial algebra, there are no nonzero monomials $s_1^{l_1}\dotsc s_n^{l_n}$ in $B$ such that $l_i\geq k_i$. Hence if $s_1^{l_1}\dotsc s_n^{l_n}$ is a nonzero monomial in $B$, then $l_j<k_j$ for some $j$. But this yields~$\partial_j^{k_j}(s_1^{l_1}\dotsc s_n^{l_n})=0$. In particular, $\partial_1^{k_1}\dotsc\partial_n^{k_n}(s_1^{l_1}\dotsc s_n^{l_n})=0$. It means that $\partial_1^{k_1}\dotsc\partial_n^{k_n} = 0$.

        Now, note that all nonzero $\partial_1^{k_1}\dotsc\partial_n^{k_n}$ are linearly independent. Indeed, let \begin{equation*}          \sum_{\partial_1^{i_1}\dotsc\partial_n^{i_n}\neq0}\alpha_{i_1,\dotsc,i_n}\partial_1^{i_1}\dotsc\partial_n^{i_n}=0, \ \alpha_{i_1,\dotsc,i_n}\in\BK,
        \end{equation*}

        and $\alpha_{k_1,\dotsc,k_n}\neq0$ for some tuple $(k_1,\dotsc,k_n)$. Apply the derivation on the left side to the monomial $s_1^{k_1}\dotsc s_n^{k_n}$. We get \begin{equation*}
            \alpha_{k_1,\dotsc,k_n}+M = 0,
        \end{equation*} where $M$ is a linear combination of non-constant monomials and hence $\alpha_{k_1,\dotsc,k_n}=0$. This is a contradiction.

        Finally, consider the map $s_i\mapsto\partial_i$ from $B$ to $\BK[\partial_1,\dotsc,\partial_n]$. It is a well-defined homomorphism of algebras. The above reasoning shows that this map is bijective and hence it yields the required isomorphism.
    \end{proof}
\end{proof}

\begin{example}\label{example1}
    Consider the variety $\BP^1\times\dotsc\times\BP^1$. It is a smooth toric variety that admits a unique, up to equivalence, additive action and this action has to be equivalent to the normalized one by Theorem~\ref{normandany}. The uniqueness follows from the fact that\begin{equation*}
        \Aut(\underbrace{\BP^1\times\dotsc\times\BP^1}_n) \simeq (PGL_2(\BK))^n\rtimes S_n
    \end{equation*} and hence a maximal unipotent group is isomorphic to $\BG_a^n$. Consequently, all $\BG_a^n$-subgroups in the automorphism group are conjugate and induce equivalent additive actions.
    
    Choose the homogeneous coordinates~$[z_0:z_1]$ on $\BP^1$. Then $\BG_m$ and $\BG_a$ act on every factor \begin{equation*}
        t\cdot[z_0:z_1] = [z_0:tz_1], \ a\cdot[z_0:z_1] = [z_0:z_1+az_0], \ t\in\BG_m, \ a\in\BG_a.
    \end{equation*} Denote by $\{\infty\}$ the point $[0:1]$; it is fixed under both actions. The divisors \begin{equation*}
        D_i = \BP^1\times\dotsc\times\overbrace{\{\infty\}}^{i}\times\dotsc\times\BP^1
    \end{equation*} form the basis of $\Cl(\BP^1\times\dotsc\times\BP^1)$, and they are invariant under both actions. It is well known that a divisor~$D=k_1D_1+\dotsc+k_nD_n$ is very ample if and only if~$k_i>0$ for all~$i=1,\dotsc,n$. The corresponding global sections can be easily calculated \begin{equation*}
        H^0(\BP^1\times\dotsc\times\BP^1,\CO(k_1D_1+\dotsc+k_nD_n)) = \bigoplus_{0\leq i_j\leq k_j}\BK\chi^{(i_1,\dotsc,i_n)}.
    \end{equation*} Hence by Theorem~\ref{algebranorm} the linearly normal embeddings of $\BP^1\times\dotsc\times\BP^1$ with the normalized additive action correspond to the $S$-pairs \begin{equation}\label{p1copies}
        A = \bigoplus_{0\leq i_j\leq k_j}\BK\chi^{(i_1,\dotsc,i_n)}, \ U = \BK\chi^{(1,0,\dotsc,0)}\oplus\dotsc\oplus\BK\chi^{(0,\dotsc,0,1)}.
    \end{equation} These monomial algebras are Gorenstein, i.e. the \emph{socle} of the algebra $\Soc (A) \coloneq \Ann(\mf_A)$ is of dimension 1, where $\mf_A$ denotes the maximal ideal of $A$. Conversely, any monomial Gorenstein local algebra has form (\ref{p1copies}). Indeed, in this case a one-dimensional socle is generated by a unique maximal monomial and hence any other non-zero monomial divides it. 
    
    This means that there is a one-to-one correspondence between $S$-pairs consisting of a monomial Gorenstein algebra with a subspace spanned by variables and induced additive actions on linearly normal embeddings of $\BP^1\times\dotsc\times\BP^1$.
\end{example}

The following corollary might seem a little tautological since it reformulates Construction~\ref{dersalgebra}. But it shows that all $S$-pairs of a linearly normal toric variety can be obtained by restricting some derivations on a unique local monomial algebra that corresponds to the normalized action.

\begin{corollary}\label{secondalgebra}
    Under the assumptions of Theorem~\ref{algebranorm} let $\alpha$ be an arbitrary additive action on~$X$ induced by derivations $\delta_1,\dotsc,\delta_n$ in standard coordinates. Then it corresponds to the $S$-pair~$(A_\alpha,U_\alpha)$, where\begin{gather*}
        A_\alpha= \BK[\delta_1,\dotsc,\delta_n]/\ker\pi, \ \pi(\delta_i)\coloneq\delta_i\Big|_A, \\
        U_\alpha = \bigoplus_{i=1}^n\BK\cdot\delta_i\Big|_A.
    \end{gather*}
\end{corollary}
\begin{proof}
    Immediately follows from Construction~\ref{dersalgebra} since $H^0(X,\CO_X(D)) = A$.
\end{proof}

Several examples illustrating this corollary will be provided in Section~4. Note that $\dim_\BK A_\alpha = \dim_\BK A$ since by construction these algebras correspond to induced additive actions on the same variety and in the same projective space.

\section{Local algebras of additive actions on projective toric surfaces for low-dimensional embeddings}\label{lowdimalgebras}

In this section we remind the reader of some basic facts about toric varieties and describe local algebras of linearly normal low-dimensional embeddings of projective toric surfaces admitting an additive action.

Let $T = \{(t_1,\dotsc,t_n): t_i\in\BK^\times\}$ be a torus of dimension $n$. Denote by $M~\coloneq~\homo(T,\BK^\times)~\simeq~\BZ^n$ its character lattice and by $N \coloneq \homo(M, \BZ)$ the dual lattice of one-parametric subgroups. By $M_\BQ$ and $N_\BQ$ we denote the $\BQ$-vector spaces $M\otimes_\BZ\BQ$ and~$N\otimes_\BZ\BQ$, respectively. There is a natural pairing \begin{equation*}
    \langle\cdot,\cdot\rangle:M\times N \to \BZ,
\end{equation*} that can be extended to a pairing \begin{equation*}
    \langle\cdot,\cdot\rangle:M_\BQ\times N_\BQ \to \BQ.
\end{equation*}

Let $X$ be a toric variety with the acting torus $T$. Then $X$ corresponds to a fan~$\Sigma\subseteq N_\BQ$. Fix primitive vectors $p_1,\dotsc,p_k$ of the rays of~$\Sigma$. It follows from~\cite[Theorem 3.4]{AR} that $X$ admits an additive action if and only if $\Sigma$ is \emph{bilateral}, i.e. it has the following form: $n$ primitive vectors $p_1, \dotsc,p_n$ of rays of $\Sigma$ form a basis of $N$ and other primitive vectors $p_{n+1},\dotsc,p_k$ lie in the negative octant with respect to the basis~$p_1,\dotsc,p_n$.

It is well known that on a toric variety any divisor is linearly equivalent to a $T$-invariant divisor. Let $D_1,\dotsc,D_k$ be prime divisors corresponding to rays of $\Sigma$. For any divisor $D=\sum a_iD_i$ we can define the lattice polytope \begin{equation*}
    P_D= \{m\in M_\BQ: \ \langle m,p_i\rangle + a_i \geq0\}.
\end{equation*} If $P_D$ is full-dimensional, then the fan $\Sigma$ is a normal fan of $P_D$. Then we have \begin{equation*}
    H^0(X,\CO_X(D)) = \bigoplus_{\div(\chi^m)+D\geq0}\BK\cdot\chi^m = \bigoplus_{m\in P_D}\BK\cdot\chi^m.
\end{equation*}

Consider the morphism \begin{equation*}
    \varphi_D:T\to\BP^{N}, \ \varphi_D(t)=[\chi^{m_0}(t):\dotsc:\chi^{m_N}(t)], \ m_j\in P_D.
\end{equation*} Then if $D$ is very ample, then $X$ is isomorphic to the closure of the image of~$\varphi_D$.

Now we return to the case of toric varieties with an additive action. If a fan is bilateral then the polytope $P_D$ is inscribed in a rectangle, i.e. there exists a vertex $v_0\in P_D$ such that \begin{enumerate}
    \item the primitive vectors on the edges of $P$ containing $v_0$ form a basis $e_1,\dotsc,e_n$ of the lattice $M$;
    \item for every inequality $\langle p,x\rangle\leq a$ on $P$ that corresponds to a facet of $P$ not passing through~$v_0$ we have $\langle p,e_i\rangle \geq 0$ for all $i=1,\dotsc,n$.
\end{enumerate}

It is proved in~\cite{AR} that for a very ample divisor $D$, $X_\Sigma$ admits an additive action if and only if $P_D$ is inscribed in a rectangle. But one can also obtain it from Proposition 4.

 \begin{figure}[h]
        \begin{minipage}{0.48\textwidth}
         \centering 
         \begin{tikzpicture}[x=0.75pt,y=0.75pt,yscale=-1,xscale=1]

\draw    (179.14,1150.46) -- (180.52,1001.46) ;
\draw [shift={(180.54,999.46)}, rotate = 90.53] [color={rgb, 255:red, 0; green, 0; blue, 0 }  ][line width=0.75]    (10.93,-3.29) .. controls (6.95,-1.4) and (3.31,-0.3) .. (0,0) .. controls (3.31,0.3) and (6.95,1.4) .. (10.93,3.29)   ;
\draw    (179.14,1150.46) -- (359,1150.46) ;
\draw [shift={(361,1150.46)}, rotate = 180] [color={rgb, 255:red, 0; green, 0; blue, 0 }  ][line width=0.75]    (10.93,-3.29) .. controls (6.95,-1.4) and (3.31,-0.3) .. (0,0) .. controls (3.31,0.3) and (6.95,1.4) .. (10.93,3.29)   ;
\draw    (249.09,1067.41) ;
\draw    (179.14,1090.06) -- (235.1,1090.06) ;
\draw    (235.1,1090.06) -- (291.05,1150.46) ;
\draw  [dash pattern={on 0.84pt off 2.51pt}]  (235.1,1090.06) -- (207.12,1090.06) ;
\draw [shift={(207.12,1090.06)}, rotate = 180] [color={rgb, 255:red, 0; green, 0; blue, 0 }  ][fill={rgb, 255:red, 0; green, 0; blue, 0 }  ][line width=0.75]      (0, 0) circle [x radius= 1.34, y radius= 1.34]   ;
\draw [shift={(235.1,1090.06)}, rotate = 180] [color={rgb, 255:red, 0; green, 0; blue, 0 }  ][fill={rgb, 255:red, 0; green, 0; blue, 0 }  ][line width=0.75]      (0, 0) circle [x radius= 1.34, y radius= 1.34]   ;
\draw  [dash pattern={on 0.84pt off 2.51pt}]  (179.14,1090.06) -- (207.12,1090.06) ;
\draw [shift={(207.12,1090.06)}, rotate = 0] [color={rgb, 255:red, 0; green, 0; blue, 0 }  ][fill={rgb, 255:red, 0; green, 0; blue, 0 }  ][line width=0.75]      (0, 0) circle [x radius= 1.34, y radius= 1.34]   ;
\draw [shift={(179.14,1090.06)}, rotate = 0] [color={rgb, 255:red, 0; green, 0; blue, 0 }  ][fill={rgb, 255:red, 0; green, 0; blue, 0 }  ][line width=0.75]      (0, 0) circle [x radius= 1.34, y radius= 1.34]   ;
\draw  [dash pattern={on 0.84pt off 2.51pt}]  (179.14,1150.46) -- (207,1150.46) ;
\draw [shift={(207,1150.46)}, rotate = 0] [color={rgb, 255:red, 0; green, 0; blue, 0 }  ][fill={rgb, 255:red, 0; green, 0; blue, 0 }  ][line width=0.75]      (0, 0) circle [x radius= 1.34, y radius= 1.34]   ;
\draw [shift={(179.14,1150.46)}, rotate = 0] [color={rgb, 255:red, 0; green, 0; blue, 0 }  ][fill={rgb, 255:red, 0; green, 0; blue, 0 }  ][line width=0.75]      (0, 0) circle [x radius= 1.34, y radius= 1.34]   ;
\draw  [dash pattern={on 0.84pt off 2.51pt}]  (235,1150.46) -- (207,1150.46) ;
\draw [shift={(207,1150.46)}, rotate = 180] [color={rgb, 255:red, 0; green, 0; blue, 0 }  ][fill={rgb, 255:red, 0; green, 0; blue, 0 }  ][line width=0.75]      (0, 0) circle [x radius= 1.34, y radius= 1.34]   ;
\draw [shift={(235,1150.46)}, rotate = 180] [color={rgb, 255:red, 0; green, 0; blue, 0 }  ][fill={rgb, 255:red, 0; green, 0; blue, 0 }  ][line width=0.75]      (0, 0) circle [x radius= 1.34, y radius= 1.34]   ;
\draw  [dash pattern={on 0.84pt off 2.51pt}]  (263,1150.46) -- (235,1150.46) ;
\draw [shift={(235,1150.46)}, rotate = 180] [color={rgb, 255:red, 0; green, 0; blue, 0 }  ][fill={rgb, 255:red, 0; green, 0; blue, 0 }  ][line width=0.75]      (0, 0) circle [x radius= 1.34, y radius= 1.34]   ;
\draw [shift={(263,1150.46)}, rotate = 180] [color={rgb, 255:red, 0; green, 0; blue, 0 }  ][fill={rgb, 255:red, 0; green, 0; blue, 0 }  ][line width=0.75]      (0, 0) circle [x radius= 1.34, y radius= 1.34]   ;
\draw  [dash pattern={on 0.84pt off 2.51pt}]  (291.05,1150.46) -- (263,1150.46) ;
\draw [shift={(263,1150.46)}, rotate = 180] [color={rgb, 255:red, 0; green, 0; blue, 0 }  ][fill={rgb, 255:red, 0; green, 0; blue, 0 }  ][line width=0.75]      (0, 0) circle [x radius= 1.34, y radius= 1.34]   ;
\draw [shift={(291.05,1150.46)}, rotate = 180] [color={rgb, 255:red, 0; green, 0; blue, 0 }  ][fill={rgb, 255:red, 0; green, 0; blue, 0 }  ][line width=0.75]      (0, 0) circle [x radius= 1.34, y radius= 1.34]   ;
\draw    (207.12,1090.06) -- (206.49,1109.02) ;
\draw [shift={(206.43,1111.02)}, rotate = 271.89] [color={rgb, 255:red, 0; green, 0; blue, 0 }  ][line width=0.75]    (10.93,-3.29) .. controls (6.95,-1.4) and (3.31,-0.3) .. (0,0) .. controls (3.31,0.3) and (6.95,1.4) .. (10.93,3.29)   ;
\draw    (252.12,1108.06) -- (240.77,1120.54) ;
\draw [shift={(239.43,1122.02)}, rotate = 312.28] [color={rgb, 255:red, 0; green, 0; blue, 0 }  ][line width=0.75]    (10.93,-3.29) .. controls (6.95,-1.4) and (3.31,-0.3) .. (0,0) .. controls (3.31,0.3) and (6.95,1.4) .. (10.93,3.29)   ;
\draw    (207,1150.46) -- (207.38,1133.02) ;
\draw [shift={(207.43,1131.02)}, rotate = 91.26] [color={rgb, 255:red, 0; green, 0; blue, 0 }  ][line width=0.75]    (10.93,-3.29) .. controls (6.95,-1.4) and (3.31,-0.3) .. (0,0) .. controls (3.31,0.3) and (6.95,1.4) .. (10.93,3.29)   ;
\draw    (179.26,1121.06) -- (194.43,1121.02) ;
\draw [shift={(196.43,1121.02)}, rotate = 179.86] [color={rgb, 255:red, 0; green, 0; blue, 0 }  ][line width=0.75]    (10.93,-3.29) .. controls (6.95,-1.4) and (3.31,-0.3) .. (0,0) .. controls (3.31,0.3) and (6.95,1.4) .. (10.93,3.29)   ;

\draw (243.27,1072.7) node [anchor=north west][inner sep=0.75pt]  [font=\footnotesize]  {$\chi ^{( 2,2)}$};
\draw (143.16,1120.26) node [anchor=north west][inner sep=0.75pt]  [font=\footnotesize]  {$\chi ^{( 0,1)}$};
\draw (296.16,1127.26) node [anchor=north west][inner sep=0.75pt]  [font=\footnotesize]  {$\chi ^{( 5,0)}$};

\end{tikzpicture}
         \caption{A polytope of a toric variety with an additive action.}
       \end{minipage}
       \begin{minipage}{0.48\textwidth}
         \centering
         \begin{tikzpicture}[x=0.75pt,y=0.75pt,yscale=-1,xscale=1]

\draw    (455.64,1071.13) -- (456.88,1023.02) ;
\draw [shift={(456.93,1021.02)}, rotate = 91.47] [color={rgb, 255:red, 0; green, 0; blue, 0 }  ][line width=0.75]    (10.93,-3.29) .. controls (6.95,-1.4) and (3.31,-0.3) .. (0,0) .. controls (3.31,0.3) and (6.95,1.4) .. (10.93,3.29)   ;
\draw    (455.64,1071.13) -- (506.43,1071.13) ;
\draw [shift={(508.43,1071.13)}, rotate = 180] [color={rgb, 255:red, 0; green, 0; blue, 0 }  ][line width=0.75]    (10.93,-3.29) .. controls (6.95,-1.4) and (3.31,-0.3) .. (0,0) .. controls (3.31,0.3) and (6.95,1.4) .. (10.93,3.29)   ;
\draw    (455.64,1071.13) -- (455.64,1118.02) ;
\draw [shift={(455.64,1120.02)}, rotate = 270] [color={rgb, 255:red, 0; green, 0; blue, 0 }  ][line width=0.75]    (10.93,-3.29) .. controls (6.95,-1.4) and (3.31,-0.3) .. (0,0) .. controls (3.31,0.3) and (6.95,1.4) .. (10.93,3.29)   ;
\draw    (455.64,1071.13) -- (406.86,1118.62) ;
\draw [shift={(405.43,1120.02)}, rotate = 315.77] [color={rgb, 255:red, 0; green, 0; blue, 0 }  ][line width=0.75]    (10.93,-3.29) .. controls (6.95,-1.4) and (3.31,-0.3) .. (0,0) .. controls (3.31,0.3) and (6.95,1.4) .. (10.93,3.29)   ;

\draw (502,1080.93) node [anchor=north west][inner sep=0.75pt]  [font=\scriptsize]  {$( 1,0)$};
\draw (467,1020.93) node [anchor=north west][inner sep=0.75pt]  [font=\scriptsize]  {$( 0,1)$};
\draw (463,1125.93) node [anchor=north west][inner sep=0.75pt]  [font=\scriptsize]  {$( 0,-1)$};
\draw (371,1092.93) node [anchor=north west][inner sep=0.75pt]  [font=\scriptsize]  {$( -1,-1)$};

\end{tikzpicture}
         \caption{The corresponding fan.}
       \end{minipage}
    \end{figure}

Consider $n = 2$, i.e. the case of toric surfaces. In this case if $P_D$ has dimension~2 (i.e. it is full-dimensional), then $D$ is very ample. One can find the proof in~\cite[Corollary 2.2.18, Proposition 6.1.4]{C}.

Since $\Sigma$ is bilateral, $p_1$ and $p_2$ form a basis of $N$ and $p_3,\dotsc,p_k$ have the following form: \begin{equation}
    p_i = -\alpha_{i1}p_1-\alpha_{i2}p_2, \ i > 2, \ \alpha_{ij} \geq 0.
\end{equation} A fan $\Sigma$ is called \emph{wide} if there are $i_1$ and $i_2$ such that $\alpha_{i_1 1}>\alpha_{i_1 2}$ and $\alpha_{i_2 1}<\alpha_{i_2 2}$. This definition was introduced in~\cite{DZ}.

\begin{figure}[H]
    \begin{minipage}{0.48\textwidth}
     \centering    
    \begin{tikzpicture}[x=0.75pt,y=0.75pt,yscale=-1,xscale=1]

\draw    (167.47,819.01) -- (248.01,817.86) ;
\draw [shift={(250.01,817.83)}, rotate = 179.18] [color={rgb, 255:red, 0; green, 0; blue, 0 }  ][line width=0.75]    (10.93,-3.29) .. controls (6.95,-1.4) and (3.31,-0.3) .. (0,0) .. controls (3.31,0.3) and (6.95,1.4) .. (10.93,3.29)   ;
\draw    (167.47,819.01) -- (167.43,732.84) ;
\draw [shift={(167.42,730.84)}, rotate = 89.97] [color={rgb, 255:red, 0; green, 0; blue, 0 }  ][line width=0.75]    (10.93,-3.29) .. controls (6.95,-1.4) and (3.31,-0.3) .. (0,0) .. controls (3.31,0.3) and (6.95,1.4) .. (10.93,3.29)   ;
\draw    (167.47,819.01) -- (90.35,828.98) ;
\draw [shift={(88.37,829.23)}, rotate = 352.64] [color={rgb, 255:red, 0; green, 0; blue, 0 }  ][line width=0.75]    (10.93,-3.29) .. controls (6.95,-1.4) and (3.31,-0.3) .. (0,0) .. controls (3.31,0.3) and (6.95,1.4) .. (10.93,3.29)   ;
\draw    (167.47,819.01) -- (152.9,909.9) ;
\draw [shift={(152.59,911.88)}, rotate = 279.11] [color={rgb, 255:red, 0; green, 0; blue, 0 }  ][line width=0.75]    (10.93,-3.29) .. controls (6.95,-1.4) and (3.31,-0.3) .. (0,0) .. controls (3.31,0.3) and (6.95,1.4) .. (10.93,3.29)   ;
\draw    (77.33,819.36) -- (167.47,819.01) ;
\draw    (167.47,819.01) -- (167.14,902.52) ;
\draw    (167.47,819.01) -- (101.16,883.45) ;

\draw (242.26,824.24) node [anchor=north west][inner sep=0.75pt]  [font=\small]  {$p_{1}$};
\draw (175.94,732.41) node [anchor=north west][inner sep=0.75pt]  [font=\small]  {$p_{2}$};
\draw (89.08,836.21) node [anchor=north west][inner sep=0.75pt]  [font=\small]  {$p_{3}$};
\draw (130.75,899.48) node [anchor=north west][inner sep=0.75pt]  [font=\small]  {$p_{4}$};

\end{tikzpicture}
     \caption{A wide fan.}
   \end{minipage}
   \begin{minipage}{0.48\textwidth}
     \centering
    \begin{tikzpicture}[x=0.75pt,y=0.75pt,yscale=-1,xscale=1]

\draw    (359.15,818.72) -- (437.95,817.57) ;
\draw [shift={(439.95,817.54)}, rotate = 179.16] [color={rgb, 255:red, 0; green, 0; blue, 0 }  ][line width=0.75]    (10.93,-3.29) .. controls (6.95,-1.4) and (3.31,-0.3) .. (0,0) .. controls (3.31,0.3) and (6.95,1.4) .. (10.93,3.29)   ;
\draw    (359.15,818.72) -- (359.1,732.07) ;
\draw [shift={(359.1,730.07)}, rotate = 89.97] [color={rgb, 255:red, 0; green, 0; blue, 0 }  ][line width=0.75]    (10.93,-3.29) .. controls (6.95,-1.4) and (3.31,-0.3) .. (0,0) .. controls (3.31,0.3) and (6.95,1.4) .. (10.93,3.29)   ;
\draw    (359.15,818.72) -- (321.26,869.56) ;
\draw [shift={(320.06,871.16)}, rotate = 306.7] [color={rgb, 255:red, 0; green, 0; blue, 0 }  ][line width=0.75]    (10.93,-3.29) .. controls (6.95,-1.4) and (3.31,-0.3) .. (0,0) .. controls (3.31,0.3) and (6.95,1.4) .. (10.93,3.29)   ;
\draw    (359.15,818.72) -- (344.88,910.12) ;
\draw [shift={(344.57,912.1)}, rotate = 278.87] [color={rgb, 255:red, 0; green, 0; blue, 0 }  ][line width=0.75]    (10.93,-3.29) .. controls (6.95,-1.4) and (3.31,-0.3) .. (0,0) .. controls (3.31,0.3) and (6.95,1.4) .. (10.93,3.29)   ;
\draw    (270.9,819.07) -- (359.15,818.72) ;
\draw    (359.15,818.72) -- (358.82,902.69) ;
\draw    (359.15,818.72) -- (294.22,883.52) ;

\draw (435.3,821.1) node [anchor=north west][inner sep=0.75pt]  [font=\small]  {$p_{1}$};
\draw (368.35,730.69) node [anchor=north west][inner sep=0.75pt]  [font=\small]  {$p_{2}$};
\draw (314.73,876.4) node [anchor=north west][inner sep=0.75pt]  [font=\small]  {$p_{3}$};
\draw (323.99,904.43) node [anchor=north west][inner sep=0.75pt]  [font=\small]  {$p_{4}$};

\end{tikzpicture}
     \caption{A non-wide fan.}
   \end{minipage}
\end{figure}

We call a 2-dimensional lattice polytope $P$ \emph{elongated} if its normal fan is not wide.

\begin{theorem}\label{surfacesuntheorem}\cite[Theorem 3]{DZ} Let $X_\Sigma$ be a complete toric surface corresponding to a fan~$\Sigma$. Suppose $X_\Sigma$ admits an additive action. Then $X_\Sigma$ admits only one additive action if and only if $\Sigma$ is wide. Otherwise~$X_\Sigma$ admits exactly two non-equivalent additive actions.

\end{theorem}

Summarizing our reasoning above, we can formulate the following proposition. It describes the relationship between additive actions on complete toric surfaces and the corresponding very ample polytopes. 

\begin{proposition}There is a one-to-one correspondence between complete linearly normal toric surfaces~$X\subseteq~\BP^N$ admitting an additive action and polytopes~$P$ inscribed in a rectangle and containing~$N+1$ integer points. The fan of~$X$ is the normal fan of~$P$.
Moreover, there is a non-normalized additive action on~$X$ if and only if~$P$ is elongated.
\end{proposition}

\begin{remark}\label{derivationweight}
    Without loss of generality, for not wide fans we can assume that $\alpha_{i2}>\alpha_{i1}$ for~$i>2$. Consider the cone $\sigma$ generated by primitive vectors $p_1$ and $p_2$. The corresponding affine chart $U_\sigma=\Spec\BK[\sigma^{\vee}\cap M]$ is isomorphic to $\BA^2$. Choose coordinates $x, y$ on $U_\sigma$ such that\begin{equation*}
        (t_1,t_2)\cdot(x,y) = (t_1x,t_2y).
    \end{equation*} By Proposition~\ref{openorbit} we can assume that all additive actions have the same open orbit. From~\cite[Section 6]{DZ} it follows that we can assume that this open orbit coincides with~$U_\sigma$. Then the normalized additive action corresponds to the derivations $\partial/\partial x$ and $\partial/\partial y$ and the non-normalized one corresponds to the derivations $\partial/\partial x + x^d\partial/\partial y$ and~$\partial/\partial y$, where~$d =\lfloor\min_{i>2}\{\alpha_{i2}/\alpha_{i1}\}\rfloor$ (see \cite[Section 6 and Lemma 7]{DZ}).
\end{remark}

\begin{example}
    Consider the polytope $D_{1,n}$ that is the convex hull of the points $(0,0)$, $(n, 0)$ and $(0, 1)$. It defines a morphism $T\to\BP^{n+1}$ given by the formula\begin{equation}\label{wppmapping}
        \varphi_{D_{1,n}}(t_1,t_2)=[1:t_1:t_1^2:\dotsc:t_1^n:t_2], \ t_1, t_2 \in \BK^\times.
    \end{equation} The closure of the image gives us a toric projective surface with a fan that has primitive vectors $(1, 0)$, $(0, 1)$ and $(-1,-n)$.

    \begin{figure}[h]
    \begin{minipage}{0.495\textwidth}
     \centering 
     \begin{tikzpicture}[x=0.75pt,y=0.75pt,yscale=-1,xscale=1]

\draw    (49,1259.47) -- (49.41,1221.21) ;
\draw [shift={(49.43,1219.21)}, rotate = 90.61] [color={rgb, 255:red, 0; green, 0; blue, 0 }  ][line width=0.75]    (10.93,-3.29) .. controls (6.95,-1.4) and (3.31,-0.3) .. (0,0) .. controls (3.31,0.3) and (6.95,1.4) .. (10.93,3.29)   ;
\draw    (49,1259.47) -- (207.43,1258.23) ;
\draw [shift={(209.43,1258.21)}, rotate = 179.55] [color={rgb, 255:red, 0; green, 0; blue, 0 }  ][line width=0.75]    (10.93,-3.29) .. controls (6.95,-1.4) and (3.31,-0.3) .. (0,0) .. controls (3.31,0.3) and (6.95,1.4) .. (10.93,3.29)   ;
\draw  [dash pattern={on 0.84pt off 2.51pt}]  (49.21,1239.34) -- (189.43,1258.21) ;
\draw [shift={(189.43,1258.21)}, rotate = 7.67] [color={rgb, 255:red, 0; green, 0; blue, 0 }  ][fill={rgb, 255:red, 0; green, 0; blue, 0 }  ][line width=0.75]      (0, 0) circle [x radius= 1.34, y radius= 1.34]   ;
\draw [shift={(49.21,1239.34)}, rotate = 7.67] [color={rgb, 255:red, 0; green, 0; blue, 0 }  ][fill={rgb, 255:red, 0; green, 0; blue, 0 }  ][line width=0.75]      (0, 0) circle [x radius= 1.34, y radius= 1.34]   ;
\draw  [dash pattern={on 0.84pt off 2.51pt}]  (49.21,1239.34) -- (49,1259.47) ;
\draw [shift={(49,1259.47)}, rotate = 90.61] [color={rgb, 255:red, 0; green, 0; blue, 0 }  ][fill={rgb, 255:red, 0; green, 0; blue, 0 }  ][line width=0.75]      (0, 0) circle [x radius= 1.34, y radius= 1.34]   ;
\draw [shift={(49.21,1239.34)}, rotate = 90.61] [color={rgb, 255:red, 0; green, 0; blue, 0 }  ][fill={rgb, 255:red, 0; green, 0; blue, 0 }  ][line width=0.75]      (0, 0) circle [x radius= 1.34, y radius= 1.34]   ;
\draw  [dash pattern={on 0.84pt off 2.51pt}]  (69.21,1259.34) -- (49,1259.47) ;
\draw [shift={(49,1259.47)}, rotate = 179.64] [color={rgb, 255:red, 0; green, 0; blue, 0 }  ][fill={rgb, 255:red, 0; green, 0; blue, 0 }  ][line width=0.75]      (0, 0) circle [x radius= 1.34, y radius= 1.34]   ;
\draw [shift={(69.21,1259.34)}, rotate = 179.64] [color={rgb, 255:red, 0; green, 0; blue, 0 }  ][fill={rgb, 255:red, 0; green, 0; blue, 0 }  ][line width=0.75]      (0, 0) circle [x radius= 1.34, y radius= 1.34]   ;
\draw  [dash pattern={on 0.84pt off 2.51pt}]  (69.21,1259.34) -- (89.43,1259.21) ;
\draw [shift={(89.43,1259.21)}, rotate = 359.64] [color={rgb, 255:red, 0; green, 0; blue, 0 }  ][fill={rgb, 255:red, 0; green, 0; blue, 0 }  ][line width=0.75]      (0, 0) circle [x radius= 1.34, y radius= 1.34]   ;
\draw [shift={(69.21,1259.34)}, rotate = 359.64] [color={rgb, 255:red, 0; green, 0; blue, 0 }  ][fill={rgb, 255:red, 0; green, 0; blue, 0 }  ][line width=0.75]      (0, 0) circle [x radius= 1.34, y radius= 1.34]   ;
\draw  [dash pattern={on 0.84pt off 2.51pt}]  (89.43,1259.21) -- (109.43,1259.21) ;
\draw [shift={(109.43,1259.21)}, rotate = 0] [color={rgb, 255:red, 0; green, 0; blue, 0 }  ][fill={rgb, 255:red, 0; green, 0; blue, 0 }  ][line width=0.75]      (0, 0) circle [x radius= 1.34, y radius= 1.34]   ;
\draw [shift={(89.43,1259.21)}, rotate = 0] [color={rgb, 255:red, 0; green, 0; blue, 0 }  ][fill={rgb, 255:red, 0; green, 0; blue, 0 }  ][line width=0.75]      (0, 0) circle [x radius= 1.34, y radius= 1.34]   ;
\draw  [dash pattern={on 0.84pt off 2.51pt}]  (109.43,1259.21) -- (129.21,1258.84) ;
\draw [shift={(129.21,1258.84)}, rotate = 358.92] [color={rgb, 255:red, 0; green, 0; blue, 0 }  ][fill={rgb, 255:red, 0; green, 0; blue, 0 }  ][line width=0.75]      (0, 0) circle [x radius= 1.34, y radius= 1.34]   ;
\draw [shift={(109.43,1259.21)}, rotate = 358.92] [color={rgb, 255:red, 0; green, 0; blue, 0 }  ][fill={rgb, 255:red, 0; green, 0; blue, 0 }  ][line width=0.75]      (0, 0) circle [x radius= 1.34, y radius= 1.34]   ;
\draw  [dash pattern={on 0.84pt off 2.51pt}]  (149.43,1258.21) -- (129.21,1258.84) ;
\draw [shift={(129.21,1258.84)}, rotate = 178.22] [color={rgb, 255:red, 0; green, 0; blue, 0 }  ][fill={rgb, 255:red, 0; green, 0; blue, 0 }  ][line width=0.75]      (0, 0) circle [x radius= 1.34, y radius= 1.34]   ;
\draw [shift={(149.43,1258.21)}, rotate = 178.22] [color={rgb, 255:red, 0; green, 0; blue, 0 }  ][fill={rgb, 255:red, 0; green, 0; blue, 0 }  ][line width=0.75]      (0, 0) circle [x radius= 1.34, y radius= 1.34]   ;
\draw  [dash pattern={on 0.84pt off 2.51pt}]  (149.43,1258.21) -- (169.43,1258.21) ;
\draw [shift={(169.43,1258.21)}, rotate = 0] [color={rgb, 255:red, 0; green, 0; blue, 0 }  ][fill={rgb, 255:red, 0; green, 0; blue, 0 }  ][line width=0.75]      (0, 0) circle [x radius= 1.34, y radius= 1.34]   ;
\draw [shift={(149.43,1258.21)}, rotate = 0] [color={rgb, 255:red, 0; green, 0; blue, 0 }  ][fill={rgb, 255:red, 0; green, 0; blue, 0 }  ][line width=0.75]      (0, 0) circle [x radius= 1.34, y radius= 1.34]   ;
\draw  [dash pattern={on 0.84pt off 2.51pt}]  (169.43,1258.21) -- (189.43,1258.21) ;
\draw [shift={(189.43,1258.21)}, rotate = 0] [color={rgb, 255:red, 0; green, 0; blue, 0 }  ][fill={rgb, 255:red, 0; green, 0; blue, 0 }  ][line width=0.75]      (0, 0) circle [x radius= 1.34, y radius= 1.34]   ;
\draw [shift={(169.43,1258.21)}, rotate = 0] [color={rgb, 255:red, 0; green, 0; blue, 0 }  ][fill={rgb, 255:red, 0; green, 0; blue, 0 }  ][line width=0.75]      (0, 0) circle [x radius= 1.34, y radius= 1.34]   ;

\draw (34,1234.93) node [anchor=north west][inner sep=0.75pt]  [font=\scriptsize]  {$1$};
\draw (187,1263.93) node [anchor=north west][inner sep=0.75pt]  [font=\footnotesize]  {$n$};

\end{tikzpicture}
     \caption{The polytope $D_{1,n}$.}
   \end{minipage}
   \begin{minipage}{0.495\textwidth}
     \centering
     \begin{tikzpicture}[x=0.75pt,y=0.75pt,yscale=-1,xscale=1]

\draw    (360.43,1240.21) -- (360.43,1202.21) ;
\draw [shift={(360.43,1200.21)}, rotate = 90] [color={rgb, 255:red, 0; green, 0; blue, 0 }  ][line width=0.75]    (10.93,-3.29) .. controls (6.95,-1.4) and (3.31,-0.3) .. (0,0) .. controls (3.31,0.3) and (6.95,1.4) .. (10.93,3.29)   ;
\draw    (360.43,1240.21) -- (398.43,1240.21) ;
\draw [shift={(400.43,1240.21)}, rotate = 180] [color={rgb, 255:red, 0; green, 0; blue, 0 }  ][line width=0.75]    (10.93,-3.29) .. controls (6.95,-1.4) and (3.31,-0.3) .. (0,0) .. controls (3.31,0.3) and (6.95,1.4) .. (10.93,3.29)   ;
\draw    (360.43,1240.21) -- (340.91,1318.27) ;
\draw [shift={(340.43,1320.21)}, rotate = 284.04] [color={rgb, 255:red, 0; green, 0; blue, 0 }  ][line width=0.75]    (10.93,-3.29) .. controls (6.95,-1.4) and (3.31,-0.3) .. (0,0) .. controls (3.31,0.3) and (6.95,1.4) .. (10.93,3.29)   ;

\draw (393,1248.93) node [anchor=north west][inner sep=0.75pt]  [font=\scriptsize]  {$( 1,0)$};
\draw (368,1197.93) node [anchor=north west][inner sep=0.75pt]  [font=\scriptsize]  {$( 0,1)$};
\draw (350,1310.93) node [anchor=north west][inner sep=0.75pt]  [font=\scriptsize]  {$( -1,-n)$};

\end{tikzpicture}
     \caption{The fan of $\BP(1,1,n)$.}
   \end{minipage}
\end{figure}
    
    This is the weighted projective plane $\BP(1,1,n)$. Its fan is not wide, so it has a second non-normalized additive action induced by the derivations $\delta_1 =\partial/\partial x + x^n\partial/\partial y$ and $\delta_2 = \partial/\partial y$ (see Remark~\ref{derivationweight}). The first additive action corresponds to the $S$-pair with the algebra~$W_n=\BK[x,y]/(x^{n+1},xy,y^2)$ and the subspace $U_n=\langle x, y\rangle$ by Theorem~\ref{algebranorm}. The second one corresponds to the $S$-pair\begin{equation}
        W_n' = \BK[\delta_1|_{W_n}, \delta_2|_{W_n}], \ U_n' = \langle\delta_1|_{W_n},\delta_2|_{W_n}\rangle
    \end{equation} by Corollary~\ref{secondalgebra}.

    It is easy to see that on $W_n$ the following relations hold:\begin{equation*}
        \delta_2^2=0, \ \delta_1\delta_2=0, \ \delta_1^{n+1} = n!\delta_2,
    \end{equation*} so we have \begin{equation}
        W_n' \simeq \BK[t,s]/(t^{n+1}-n!s, ts, s^2)\simeq\BK[t,s]/(t^{n+1}-s,ts,s^2)\simeq{\BK[t]/(t^{n+2})},
    \end{equation} and under these isomorphisms $U_n'$ maps to the subspace $\langle t, t^{n+1}\rangle$. This yields an $S$-pair that corresponds to the non-normalized additive action:\begin{equation}
        (\BK[t]/(t^{n+2}), \ \langle t, t^{n+1}\rangle).
    \end{equation}
\end{example}

\begin{example}\label{hirzalgebraslow}
    Now consider the polytope $D_{2,n-1}$ that is the convex hull of the points $(0,0)$, $(n, 0)$, $(0, 1)$ and $(1,1)$, $n\geq2$. It defines a morphism $T\to\BP^{n+2}$:\begin{equation}\label{hirzmapping}
        \varphi_{D_{2,n-1}}(t_1,t_2)=[1:t_1:t_1^2:\dotsc:t_1^n:t_1t_2:t_2], \ t_1,t_2\in\BK^\times.
    \end{equation}
    The closure gives us a projective toric surface with a fan with primitive vectors $(1, 0)$, $(0, 1)$, $(-1,-n+1)$ and $(0, -1)$. It is the fan of the Hirzebruch surface $\BF_{n-1}$.

     \begin{figure}[h]
        \begin{minipage}{0.52\textwidth}
         \centering 
         \begin{tikzpicture}[x=0.75pt,y=0.75pt,yscale=-1,xscale=1]

\draw    (41,1260.47) -- (41.41,1222.21) ;
\draw [shift={(41.43,1220.21)}, rotate = 90.61] [color={rgb, 255:red, 0; green, 0; blue, 0 }  ][line width=0.75]    (10.93,-3.29) .. controls (6.95,-1.4) and (3.31,-0.3) .. (0,0) .. controls (3.31,0.3) and (6.95,1.4) .. (10.93,3.29)   ;
\draw    (41,1260.47) -- (199.43,1259.23) ;
\draw [shift={(201.43,1259.21)}, rotate = 179.55] [color={rgb, 255:red, 0; green, 0; blue, 0 }  ][line width=0.75]    (10.93,-3.29) .. controls (6.95,-1.4) and (3.31,-0.3) .. (0,0) .. controls (3.31,0.3) and (6.95,1.4) .. (10.93,3.29)   ;
\draw  [dash pattern={on 0.84pt off 2.51pt}]  (41.21,1240.34) -- (41,1260.47) ;
\draw [shift={(41,1260.47)}, rotate = 90.61] [color={rgb, 255:red, 0; green, 0; blue, 0 }  ][fill={rgb, 255:red, 0; green, 0; blue, 0 }  ][line width=0.75]      (0, 0) circle [x radius= 1.34, y radius= 1.34]   ;
\draw [shift={(41.21,1240.34)}, rotate = 90.61] [color={rgb, 255:red, 0; green, 0; blue, 0 }  ][fill={rgb, 255:red, 0; green, 0; blue, 0 }  ][line width=0.75]      (0, 0) circle [x radius= 1.34, y radius= 1.34]   ;
\draw  [dash pattern={on 0.84pt off 2.51pt}]  (61.21,1260.34) -- (41,1260.47) ;
\draw [shift={(41,1260.47)}, rotate = 179.64] [color={rgb, 255:red, 0; green, 0; blue, 0 }  ][fill={rgb, 255:red, 0; green, 0; blue, 0 }  ][line width=0.75]      (0, 0) circle [x radius= 1.34, y radius= 1.34]   ;
\draw [shift={(61.21,1260.34)}, rotate = 179.64] [color={rgb, 255:red, 0; green, 0; blue, 0 }  ][fill={rgb, 255:red, 0; green, 0; blue, 0 }  ][line width=0.75]      (0, 0) circle [x radius= 1.34, y radius= 1.34]   ;
\draw  [dash pattern={on 0.84pt off 2.51pt}]  (61.21,1260.34) -- (81.43,1260.21) ;
\draw [shift={(81.43,1260.21)}, rotate = 359.64] [color={rgb, 255:red, 0; green, 0; blue, 0 }  ][fill={rgb, 255:red, 0; green, 0; blue, 0 }  ][line width=0.75]      (0, 0) circle [x radius= 1.34, y radius= 1.34]   ;
\draw [shift={(61.21,1260.34)}, rotate = 359.64] [color={rgb, 255:red, 0; green, 0; blue, 0 }  ][fill={rgb, 255:red, 0; green, 0; blue, 0 }  ][line width=0.75]      (0, 0) circle [x radius= 1.34, y radius= 1.34]   ;
\draw  [dash pattern={on 0.84pt off 2.51pt}]  (81.43,1260.21) -- (101.43,1260.21) ;
\draw [shift={(101.43,1260.21)}, rotate = 0] [color={rgb, 255:red, 0; green, 0; blue, 0 }  ][fill={rgb, 255:red, 0; green, 0; blue, 0 }  ][line width=0.75]      (0, 0) circle [x radius= 1.34, y radius= 1.34]   ;
\draw [shift={(81.43,1260.21)}, rotate = 0] [color={rgb, 255:red, 0; green, 0; blue, 0 }  ][fill={rgb, 255:red, 0; green, 0; blue, 0 }  ][line width=0.75]      (0, 0) circle [x radius= 1.34, y radius= 1.34]   ;
\draw  [dash pattern={on 0.84pt off 2.51pt}]  (101.43,1260.21) -- (121.21,1259.84) ;
\draw [shift={(121.21,1259.84)}, rotate = 358.92] [color={rgb, 255:red, 0; green, 0; blue, 0 }  ][fill={rgb, 255:red, 0; green, 0; blue, 0 }  ][line width=0.75]      (0, 0) circle [x radius= 1.34, y radius= 1.34]   ;
\draw [shift={(101.43,1260.21)}, rotate = 358.92] [color={rgb, 255:red, 0; green, 0; blue, 0 }  ][fill={rgb, 255:red, 0; green, 0; blue, 0 }  ][line width=0.75]      (0, 0) circle [x radius= 1.34, y radius= 1.34]   ;
\draw  [dash pattern={on 0.84pt off 2.51pt}]  (141.43,1259.21) -- (121.21,1259.84) ;
\draw [shift={(121.21,1259.84)}, rotate = 178.22] [color={rgb, 255:red, 0; green, 0; blue, 0 }  ][fill={rgb, 255:red, 0; green, 0; blue, 0 }  ][line width=0.75]      (0, 0) circle [x radius= 1.34, y radius= 1.34]   ;
\draw [shift={(141.43,1259.21)}, rotate = 178.22] [color={rgb, 255:red, 0; green, 0; blue, 0 }  ][fill={rgb, 255:red, 0; green, 0; blue, 0 }  ][line width=0.75]      (0, 0) circle [x radius= 1.34, y radius= 1.34]   ;
\draw  [dash pattern={on 0.84pt off 2.51pt}]  (141.43,1259.21) -- (161.43,1259.21) ;
\draw [shift={(161.43,1259.21)}, rotate = 0] [color={rgb, 255:red, 0; green, 0; blue, 0 }  ][fill={rgb, 255:red, 0; green, 0; blue, 0 }  ][line width=0.75]      (0, 0) circle [x radius= 1.34, y radius= 1.34]   ;
\draw [shift={(141.43,1259.21)}, rotate = 0] [color={rgb, 255:red, 0; green, 0; blue, 0 }  ][fill={rgb, 255:red, 0; green, 0; blue, 0 }  ][line width=0.75]      (0, 0) circle [x radius= 1.34, y radius= 1.34]   ;
\draw  [dash pattern={on 0.84pt off 2.51pt}]  (161.43,1259.21) -- (181.43,1259.21) ;
\draw [shift={(181.43,1259.21)}, rotate = 0] [color={rgb, 255:red, 0; green, 0; blue, 0 }  ][fill={rgb, 255:red, 0; green, 0; blue, 0 }  ][line width=0.75]      (0, 0) circle [x radius= 1.34, y radius= 1.34]   ;
\draw [shift={(161.43,1259.21)}, rotate = 0] [color={rgb, 255:red, 0; green, 0; blue, 0 }  ][fill={rgb, 255:red, 0; green, 0; blue, 0 }  ][line width=0.75]      (0, 0) circle [x radius= 1.34, y radius= 1.34]   ;
\draw  [dash pattern={on 0.84pt off 2.51pt}]  (61.43,1240.21) -- (41.21,1240.34) ;
\draw [shift={(41.21,1240.34)}, rotate = 179.64] [color={rgb, 255:red, 0; green, 0; blue, 0 }  ][fill={rgb, 255:red, 0; green, 0; blue, 0 }  ][line width=0.75]      (0, 0) circle [x radius= 1.34, y radius= 1.34]   ;
\draw [shift={(61.43,1240.21)}, rotate = 179.64] [color={rgb, 255:red, 0; green, 0; blue, 0 }  ][fill={rgb, 255:red, 0; green, 0; blue, 0 }  ][line width=0.75]      (0, 0) circle [x radius= 1.34, y radius= 1.34]   ;
\draw  [dash pattern={on 0.84pt off 2.51pt}]  (181.43,1259.21) -- (61.43,1240.21) ;
\draw [shift={(61.43,1240.21)}, rotate = 189] [color={rgb, 255:red, 0; green, 0; blue, 0 }  ][fill={rgb, 255:red, 0; green, 0; blue, 0 }  ][line width=0.75]      (0, 0) circle [x radius= 1.34, y radius= 1.34]   ;
\draw [shift={(181.43,1259.21)}, rotate = 189] [color={rgb, 255:red, 0; green, 0; blue, 0 }  ][fill={rgb, 255:red, 0; green, 0; blue, 0 }  ][line width=0.75]      (0, 0) circle [x radius= 1.34, y radius= 1.34]   ;
\draw  [dash pattern={on 0.84pt off 2.51pt}]  (61.21,1260.34) -- (61.43,1240.21) ;
\draw [shift={(61.43,1240.21)}, rotate = 270.61] [color={rgb, 255:red, 0; green, 0; blue, 0 }  ][fill={rgb, 255:red, 0; green, 0; blue, 0 }  ][line width=0.75]      (0, 0) circle [x radius= 1.34, y radius= 1.34]   ;
\draw [shift={(61.21,1260.34)}, rotate = 270.61] [color={rgb, 255:red, 0; green, 0; blue, 0 }  ][fill={rgb, 255:red, 0; green, 0; blue, 0 }  ][line width=0.75]      (0, 0) circle [x radius= 1.34, y radius= 1.34]   ;

\draw (26,1235.93) node [anchor=north west][inner sep=0.75pt]  [font=\scriptsize]  {$1$};
\draw (179,1264.93) node [anchor=north west][inner sep=0.75pt]  [font=\footnotesize]  {$n$};
\draw (60,1269.93) node [anchor=north west][inner sep=0.75pt]  [font=\scriptsize]  {$1$};

\end{tikzpicture}
         \caption{The polytope $D_{2,n-1}$.}
       \end{minipage}
       \begin{minipage}{0.47\textwidth}
         \centering
         \begin{tikzpicture}[x=0.75pt,y=0.75pt,yscale=-1,xscale=1]

\draw    (360.43,1240.21) -- (360.43,1202.21) ;
\draw [shift={(360.43,1200.21)}, rotate = 90] [color={rgb, 255:red, 0; green, 0; blue, 0 }  ][line width=0.75]    (10.93,-3.29) .. controls (6.95,-1.4) and (3.31,-0.3) .. (0,0) .. controls (3.31,0.3) and (6.95,1.4) .. (10.93,3.29)   ;
\draw    (360.43,1240.21) -- (398.43,1240.21) ;
\draw [shift={(400.43,1240.21)}, rotate = 180] [color={rgb, 255:red, 0; green, 0; blue, 0 }  ][line width=0.75]    (10.93,-3.29) .. controls (6.95,-1.4) and (3.31,-0.3) .. (0,0) .. controls (3.31,0.3) and (6.95,1.4) .. (10.93,3.29)   ;
\draw    (360.43,1240.21) -- (341.07,1297.72) ;
\draw [shift={(340.43,1299.62)}, rotate = 288.61] [color={rgb, 255:red, 0; green, 0; blue, 0 }  ][line width=0.75]    (10.93,-3.29) .. controls (6.95,-1.4) and (3.31,-0.3) .. (0,0) .. controls (3.31,0.3) and (6.95,1.4) .. (10.93,3.29)   ;
\draw    (360.43,1240.21) -- (360.43,1277.62) ;
\draw [shift={(360.43,1279.62)}, rotate = 270] [color={rgb, 255:red, 0; green, 0; blue, 0 }  ][line width=0.75]    (10.93,-3.29) .. controls (6.95,-1.4) and (3.31,-0.3) .. (0,0) .. controls (3.31,0.3) and (6.95,1.4) .. (10.93,3.29)   ;

\draw (393,1248.93) node [anchor=north west][inner sep=0.75pt]  [font=\scriptsize]  {$( 1,0)$};
\draw (368,1197.93) node [anchor=north west][inner sep=0.75pt]  [font=\scriptsize]  {$( 0,1)$};
\draw (264,1293.93) node [anchor=north west][inner sep=0.75pt]  [font=\scriptsize]  {$( -1,-n+1)$};
\draw (366,1284.93) node [anchor=north west][inner sep=0.75pt]  [font=\scriptsize]  {$( 0,-1)$};

\end{tikzpicture}
         \caption{The fan of $\BF_{n-1}$.}
       \end{minipage}
    \end{figure}

    The fan is not wide, so again we have the second non-normalized additive action induced by derivations $\delta_1 = \partial/\partial x + x^{n-1}\partial/\partial y$ and $\delta_2 = \partial/\partial y$ (see Remark~\ref{derivationweight}). By Theorem~\ref{algebranorm}, the normalized action corresponds to the $S$-pair consisting of the algebra $H_{n-1}=\BK[x,y]/(x^{n+1},y^2,x^2y)$ and the subspace $V_{n-1}=\langle x,y\rangle$. By Corollary~\ref{secondalgebra}, the non-normalized additive action corresponds to the following $S$-pair:\begin{equation}
        H_{n-1}'= \BK[\delta_1|_{H_{n-1}},\delta_2|_{H_{n-1}}], \ V_{n-1}' = \langle\delta_1|_{H_{n-1}},\delta_2|_{H_{n-1}}\rangle.
    \end{equation}
    Now we want to understand relations on $\delta_1$ and $\delta_2$ restricted to $H_{n-1}$. Further in the example, by $\delta_1$ and $\delta_2$ we mean their restrictions to the algebra $H_{n-1}.$  Firstly, a trivial induction by $k$ gives us\begin{equation*}
        \delta_1^k = \Big(\frac{\partial}{\partial x}\Big)^k+\frac{(n-1)!}{(n-k)!}x^{n-k}\frac{\partial}{\partial y} + k\frac{(n-1)!}{(n-k+1)!}x^{n-k+1}\frac{\partial^2}{\partial x\partial y}, \ k\geq 2,
    \end{equation*} which yields\begin{gather*}
        \delta_1^{n+2} = 0, \\ \delta_1^{n+1} = (n+1)(n-1)!\frac{\partial^2}{\partial x\partial y} = (n+1)(n-1)!\delta_1\delta_2 = \frac{(n+1)!}{n}\delta_1\delta_2.
    \end{gather*}
    Now we can represent $H_{n-1}'$ in the following form:\begin{equation}
        H_{n-1}'\simeq\BK[u,v]/(u^{n+2}, u^{n+1}-\frac{(n+1)!}{n}uv, v^2)\simeq\BK[u,v]/(u^{n+2}, u^{n+1}-uv,v^2),
    \end{equation} and under these isomorphisms $V_{n-1}'$ maps to $\langle u, v\rangle$. Now consider the next substitution:\begin{equation*}
        t = u, \ s = v - u^n,
    \end{equation*}we have \begin{equation*}
        t^{n+2}=u^{n+2} = 0, \ ts = uv -u^{n+1} = 0, \ s^2 = v^2-2vu^{n}+u^{2n}=0.
    \end{equation*}Consequently, $H_{n-1}'$ is isomorphic to the algebra \begin{equation*}
        \BK[t,s]/(t^{n+2},ts,s^2),
    \end{equation*} and under all these isomorphisms $V_{n-1}'$ maps to $\langle t,s+t^n\rangle$. Hence the $S$-pair corresponding to the non-normalized additive action on $\BF_{n-1}$ under this embedding in $\BP^{n+2}$ has the following form:\begin{equation}
        (\BK[t,s]/(t^{n+2},ts,s^2), \langle t,s+t^n\rangle).
    \end{equation}
\end{example}

\begin{example}\label{projplane}
    Now consider the polytope $D_3$ that is the convex hull of the points $(0,0)$, $(2,0)$ and~$(0,2)$. It defines the morphism\begin{equation}\label{projplanemapping}
        \varphi_{D_3}(t_1,t_2)=[1:t_1:t_1^2:t_2:t_2^2:t_1t_2], \ t_1,t_2\in\BK^\times.
    \end{equation}It actually corresponds to the embedding of $\BP^2$ in $\BP^5$ with $\CO(2)$.

     \begin{figure}[h]
        \begin{minipage}{0.48\textwidth}
         \centering
         \begin{tikzpicture}[x=0.75pt,y=0.75pt,yscale=-1,xscale=1]

\draw    (41,1260.47) -- (40.45,1202.62) ;
\draw [shift={(40.43,1200.62)}, rotate = 89.45] [color={rgb, 255:red, 0; green, 0; blue, 0 }  ][line width=0.75]    (10.93,-3.29) .. controls (6.95,-1.4) and (3.31,-0.3) .. (0,0) .. controls (3.31,0.3) and (6.95,1.4) .. (10.93,3.29)   ;
\draw    (41,1260.47) -- (98.43,1259.64) ;
\draw [shift={(100.43,1259.62)}, rotate = 179.18] [color={rgb, 255:red, 0; green, 0; blue, 0 }  ][line width=0.75]    (10.93,-3.29) .. controls (6.95,-1.4) and (3.31,-0.3) .. (0,0) .. controls (3.31,0.3) and (6.95,1.4) .. (10.93,3.29)   ;
\draw  [dash pattern={on 0.84pt off 2.51pt}]  (41.21,1240.34) -- (41,1260.47) ;
\draw [shift={(41,1260.47)}, rotate = 90.61] [color={rgb, 255:red, 0; green, 0; blue, 0 }  ][fill={rgb, 255:red, 0; green, 0; blue, 0 }  ][line width=0.75]      (0, 0) circle [x radius= 1.34, y radius= 1.34]   ;
\draw [shift={(41.21,1240.34)}, rotate = 90.61] [color={rgb, 255:red, 0; green, 0; blue, 0 }  ][fill={rgb, 255:red, 0; green, 0; blue, 0 }  ][line width=0.75]      (0, 0) circle [x radius= 1.34, y radius= 1.34]   ;
\draw  [dash pattern={on 0.84pt off 2.51pt}]  (61.21,1260.34) -- (41,1260.47) ;
\draw [shift={(41,1260.47)}, rotate = 179.64] [color={rgb, 255:red, 0; green, 0; blue, 0 }  ][fill={rgb, 255:red, 0; green, 0; blue, 0 }  ][line width=0.75]      (0, 0) circle [x radius= 1.34, y radius= 1.34]   ;
\draw [shift={(61.21,1260.34)}, rotate = 179.64] [color={rgb, 255:red, 0; green, 0; blue, 0 }  ][fill={rgb, 255:red, 0; green, 0; blue, 0 }  ][line width=0.75]      (0, 0) circle [x radius= 1.34, y radius= 1.34]   ;
\draw  [dash pattern={on 0.84pt off 2.51pt}]  (61.21,1260.34) -- (81.43,1260.21) ;
\draw [shift={(81.43,1260.21)}, rotate = 359.64] [color={rgb, 255:red, 0; green, 0; blue, 0 }  ][fill={rgb, 255:red, 0; green, 0; blue, 0 }  ][line width=0.75]      (0, 0) circle [x radius= 1.34, y radius= 1.34]   ;
\draw [shift={(61.21,1260.34)}, rotate = 359.64] [color={rgb, 255:red, 0; green, 0; blue, 0 }  ][fill={rgb, 255:red, 0; green, 0; blue, 0 }  ][line width=0.75]      (0, 0) circle [x radius= 1.34, y radius= 1.34]   ;
\draw  [dash pattern={on 0.84pt off 2.51pt}]  (81.43,1260.21) -- (61.43,1240.21) ;
\draw [shift={(61.43,1240.21)}, rotate = 225] [color={rgb, 255:red, 0; green, 0; blue, 0 }  ][fill={rgb, 255:red, 0; green, 0; blue, 0 }  ][line width=0.75]      (0, 0) circle [x radius= 1.34, y radius= 1.34]   ;
\draw [shift={(81.43,1260.21)}, rotate = 225] [color={rgb, 255:red, 0; green, 0; blue, 0 }  ][fill={rgb, 255:red, 0; green, 0; blue, 0 }  ][line width=0.75]      (0, 0) circle [x radius= 1.34, y radius= 1.34]   ;
\draw  [dash pattern={on 0.84pt off 2.51pt}]  (61.43,1240.21) -- (41.21,1240.34) ;
\draw [shift={(41.21,1240.34)}, rotate = 179.64] [color={rgb, 255:red, 0; green, 0; blue, 0 }  ][fill={rgb, 255:red, 0; green, 0; blue, 0 }  ][line width=0.75]      (0, 0) circle [x radius= 1.34, y radius= 1.34]   ;
\draw [shift={(61.43,1240.21)}, rotate = 179.64] [color={rgb, 255:red, 0; green, 0; blue, 0 }  ][fill={rgb, 255:red, 0; green, 0; blue, 0 }  ][line width=0.75]      (0, 0) circle [x radius= 1.34, y radius= 1.34]   ;
\draw  [dash pattern={on 0.84pt off 2.51pt}]  (40.43,1219.62) -- (61.43,1240.21) ;
\draw [shift={(61.43,1240.21)}, rotate = 44.45] [color={rgb, 255:red, 0; green, 0; blue, 0 }  ][fill={rgb, 255:red, 0; green, 0; blue, 0 }  ][line width=0.75]      (0, 0) circle [x radius= 1.34, y radius= 1.34]   ;
\draw [shift={(40.43,1219.62)}, rotate = 44.45] [color={rgb, 255:red, 0; green, 0; blue, 0 }  ][fill={rgb, 255:red, 0; green, 0; blue, 0 }  ][line width=0.75]      (0, 0) circle [x radius= 1.34, y radius= 1.34]   ;
\draw  [dash pattern={on 0.84pt off 2.51pt}]  (61.21,1260.34) -- (61.43,1240.21) ;
\draw [shift={(61.43,1240.21)}, rotate = 270.61] [color={rgb, 255:red, 0; green, 0; blue, 0 }  ][fill={rgb, 255:red, 0; green, 0; blue, 0 }  ][line width=0.75]      (0, 0) circle [x radius= 1.34, y radius= 1.34]   ;
\draw [shift={(61.21,1260.34)}, rotate = 270.61] [color={rgb, 255:red, 0; green, 0; blue, 0 }  ][fill={rgb, 255:red, 0; green, 0; blue, 0 }  ][line width=0.75]      (0, 0) circle [x radius= 1.34, y radius= 1.34]   ;

\draw (26,1220.93) node [anchor=north west][inner sep=0.75pt]  [font=\scriptsize]  {$2$};
\draw (81,1268.93) node [anchor=north west][inner sep=0.75pt]  [font=\scriptsize]  {$2$};

\end{tikzpicture}
         \caption{The polytope $D_3$.}
       \end{minipage}
       \begin{minipage}{0.48\textwidth}
         \centering
       \begin{tikzpicture}[x=0.75pt,y=0.75pt,yscale=-1,xscale=1]

\draw    (360.43,1240.21) -- (360.43,1202.21) ;
\draw [shift={(360.43,1200.21)}, rotate = 90] [color={rgb, 255:red, 0; green, 0; blue, 0 }  ][line width=0.75]    (10.93,-3.29) .. controls (6.95,-1.4) and (3.31,-0.3) .. (0,0) .. controls (3.31,0.3) and (6.95,1.4) .. (10.93,3.29)   ;
\draw    (360.43,1240.21) -- (398.43,1240.21) ;
\draw [shift={(400.43,1240.21)}, rotate = 180] [color={rgb, 255:red, 0; green, 0; blue, 0 }  ][line width=0.75]    (10.93,-3.29) .. controls (6.95,-1.4) and (3.31,-0.3) .. (0,0) .. controls (3.31,0.3) and (6.95,1.4) .. (10.93,3.29)   ;
\draw    (360.43,1240.21) -- (332.83,1268.19) ;
\draw [shift={(331.43,1269.62)}, rotate = 314.61] [color={rgb, 255:red, 0; green, 0; blue, 0 }  ][line width=0.75]    (10.93,-3.29) .. controls (6.95,-1.4) and (3.31,-0.3) .. (0,0) .. controls (3.31,0.3) and (6.95,1.4) .. (10.93,3.29)   ;

\draw (393,1248.93) node [anchor=north west][inner sep=0.75pt]  [font=\scriptsize]  {$( 1,0)$};
\draw (368,1197.93) node [anchor=north west][inner sep=0.75pt]  [font=\scriptsize]  {$( 0,1)$};
\draw (338,1274.93) node [anchor=north west][inner sep=0.75pt]  [font=\scriptsize]  {$( -1,-1)$};

\end{tikzpicture}
         \caption{The fan of $\BP^2$.}
       \end{minipage}
    \end{figure}

    The normalized additive action corresponds to the $S$-pair\begin{equation*}
        A = \BK[x,y]/(x^3,y^3,x^2y,xy^2), \ U = \langle x, y \rangle
    \end{equation*}by Theorem~\ref{algebranorm}. The non-normalized additive action is induced by the derivations~$\delta_1=\partial/\partial x+x\partial/\partial y$, $\delta_2 = \partial/\partial y$ by Remark~\ref{derivationweight}. By Corollary~\ref{secondalgebra} the non-normalized additive action corresponds to the $S$-pair\begin{equation*}
        A' = \BK[\delta_1|_A,\delta_2|_A], \ U' = \langle\delta_1|_A,\delta_2|_A\rangle.
    \end{equation*} One can check that on $A$ the following equations hold: \begin{equation*}
        \delta_1^3=3\delta_1\delta_2, \ \delta_1^4=3\delta_2^2, \ \delta_1^5 = 0, \ \delta_2^3 = 0.
    \end{equation*} The substitution\begin{equation*}
        \widetilde{\delta_2} = \frac{3}{\sqrt{2}}(\delta_2-\frac{\delta_1^2}{3})
    \end{equation*} gives new relations\begin{equation*}
        \delta_1\widetilde{\delta_2}=0,  \ \delta_1^4 = \widetilde{\delta_2}^2.
    \end{equation*} Hence we get that $S$-pair $(A',U')$ is isomorphic to the following one:\begin{equation}
        (\BK[t,s]/(ts,t^4-s^2), \langle t, s\sqrt2+t^2\rangle).
    \end{equation}

\end{example}

It is easy to check that for $N\leq5$ there are no other elongated inscribed in rectangle polytopes except $D_{1,n}$ ($n\leq4$), $D_{2,n}$ ($n\leq2$) and $D_3$. Hence we can formulate the following theorem.

\begin{theorem}
    Let $X\subseteq\BP^N$ be a linearly normal toric surface admitting a non-normalized additive action. Let $(A,U)$ be the corresponding $S$-pair and $D$ be the corresponding elongated polytope. Then for $N\leq5$ all options are listed in Table~1.
\end{theorem}

\begin{table}[h]
\begin{tabular}{ |Sc|Sc|Sc|Sc|Sc| } 
\hline
$N$  & $A$ & $U$ & $X$ & $D$\\
\hline
\multirow{1}{1em}{$2$} &  $\BK[x]/(x^3)$ & $\langle x,x^{2}\rangle$ & $\BP^2$ & $D_{1,1}$ \\
\hline
\multirow{1}{1em}{$3$} &  $\BK[x]/(x^4)$ & $\langle x,x^{3}\rangle$ & $\BP(1,1,2)$ & $D_{1,2}$ \\
\hline
\multirow{2}{1em}{$4$} & $ \BK[x]/(x^5)$ & $\langle x,x^4\rangle$ & $\BP(1,1,3)$ & $D_{1,3}$  \\ 
&  $\BK[x,y]/(x^4,xy,y^2)$ & $\langle x, y+x^2\rangle$ & $\BF_1$ & $D_{2,1}$\\ 
\hline
\multirow{3}{1em}{$5$} & $\BK[x]/(x^6)$ & $\langle x,x^5\rangle$ & $\BP(1,1,4)$ & $D_{1,4}$\\
&  $\BK[x,y]/(x^5,xy,y^2)$ & $\langle x, y+x^3\rangle$ & $\BF_2$ & $D_{2,2}$ \\
&  $\BK[x,y]/(x^4-y^2)$ & $\langle x, y\sqrt{2}+x^2\rangle$ & $\BP^2$ & $D_3$ \\
\hline
\end{tabular}
\caption{
    $S$-pairs of linearly normal toric surfaces with a non-normalized additive action.}
\end{table}

For any lattice polytope $D$ inscribed in a rectangle there is a monomial algebra $A_D$ consisting of monomials corresponding to integer points of this polytope. As we proved in Theorem~\ref{algebranorm} the algebra $A_D$ together with the subspace spanned by the variables correspond to the normalized additive action on $X = \overline{\Im\varphi_D}\subseteq\BP^N$. In the case of toric surfaces $A_D$ has to be~2-generated. We present the list of such algebras $A$ of dimension $\leq6$, i.e. for $N\leq5$.

\begin{center}
\begin{table}[h]
\begin{tabular}{ |Sc|Sc|Sc| } 
\hline
$N$  & $A$ & $X$ \\
\hline
\multirow{1}{1em}{$2$} &  $ \BK[x,y]/(x^2,xy,y^2)$ & $\BP^2$ \\
\hline
\multirow{2}{1em}{$3$} & $\BK[x,y]/(x^3,xy,y^2)$ & $\BP(1,1,2)$ \\
& $\BK[x,y]/(x^2,y^2)$ & $\BP^1\times\BP^1$ \\
\hline
\multirow{2}{1em}{$4$} &  $ \BK[x,y]/(x^4,xy,y^2)$ & $\BP(1,1,3)$ \\ 
&   $\BK[x,y]/(x^3,x^2y,y^2)$ & $\BF_1$ \\ 
\hline
\multirow{4}{1em}{$5$} & $\BK[x,y]/(x^3,y^2)$ & $\BP^1\times\BP^1$ \\
& $\BK[x,y]/(x^5,xy,y^2)$ & $\BP(1,1,4)$ \\
& $\BK[x,y]/(x^4,y^2,x^2y)$ & $\BF_2$ \\
&  $\BK[x,y]/(x^3,y^3,x^2y,xy^2)$ & $\BP^2$ \\
\hline
\end{tabular}
\caption{
    Monomial 2-generated local algebras of dimension $\leq 6$ corresponding to lattice polytopes inscribed in a rectangle.}
\end{table}
\end{center}

\section*{Acknowledgments}

The author is grateful to Anton Shafarevich for his direct involvement in editing the work and for providing references on additive actions on toric varieties. Special thanks go to Ivan~Arzhantsev for his many valuable suggestions and corrections, which greatly improved the text.

\end{document}